\documentclass[a4 paper,11pt]{article}
\usepackage{stmaryrd}
\usepackage{amsfonts}
\usepackage{bbm}
\usepackage{amscd}
\usepackage{mathrsfs}
\usepackage{latexsym,amssymb,amsmath,amscd,amscd,amsthm,amsxtra,xypic}
\usepackage[dvips]{graphicx}
\usepackage[utf8]{inputenc}
\usepackage[T1]{fontenc}
\usepackage{enumerate}
\usepackage{amssymb}
\usepackage[all]{xy}
\usepackage{ifpdf}
\ifpdf
\usepackage[colorlinks=true,linkcolor=blue,final,backref=page,hyperindex,citecolor=red]{hyperref}
\else
\usepackage[colorlinks,final,backref=page,hyperindex,hypertex]{hyperref}
\fi

\usepackage{nicefrac,mathtools,enumitem}
\usepackage{microtype}
\setlength{\parindent}{22pt}
\setlength{\parskip}{5pt}
\setlength{\baselineskip}{9pt}
\setlength{\textheight}{22.5true cm}
\setlength{\textwidth}{16true cm}
\oddsidemargin 0pt
\raggedbottom
\headsep=0pt
\usepackage{amsfonts}
\usepackage{amssymb}
\usepackage{mathrsfs}
\usepackage{amsmath}
\usepackage{amsthm}
\usepackage{enumerate}
\usepackage{indentfirst}
\usepackage{amsfonts}
\usepackage{amssymb}
\usepackage{mathrsfs}
\usepackage{amsmath}
\usepackage{amsthm}
\usepackage{enumerate}
\usepackage{cite}
\usepackage{mathrsfs}
\usepackage{geometry}
\usepackage{multirow}
\allowdisplaybreaks

\usepackage{dsfont}

\topmargin -.8cm \textheight 22.8cm \oddsidemargin 0cm \evensidemargin -0cm \textwidth 16.3cm

\def\<{\langle}
\def\>{\rangle}

\def\c{\cdot}

\newcommand{\bmx}{\begin{pmatrix}}
\newcommand{\emx}{\end{pmatrix}}


\newcommand{\br}[1]{   [ \cdot,    \cdot  ]   }


\newcommand{\rprod}{\triangleleft}  
\newcommand{\lprod}{\triangleright}  
\newcommand{\mprod}{\vartriangle}   

\newcommand {\emptycomment}[1]{}

\newtheorem{thm}{Theorem}[section]
\newtheorem{lem}[thm]{Lemma}
\newtheorem{cor}[thm]{Corollary}
\newtheorem{pro}[thm]{Proposition}
\newtheorem{ex}[thm]{Example}
\newtheorem{exs}[thm]{Examples}

\theoremstyle{definition}
\newtheorem{defi}{Definition}[section]

\theoremstyle{remark}
\newtheorem{rmk}{Remark}[section]

\begin{document}
\title{\sf Anti-Leibniz algebras: A non-commutative version of mock-Lie algebras}
\date{}
\author{\normalsize \bf  Safa Braiek\small{$^{1}$} \footnote {E-mail: safabraiek25@gmail.com},\quad  Taoufik Chtioui{$^2$}\footnote{E-mail: chtioui.taoufik@yahoo.fr}\quad and\quad  Sami Mabrouk\small{$^{3}$} \footnote{  E-mail: mabrouksami00@yahoo.fr, sami.mabrouk@fsgf.u-gafsa.tn (Corresponding author)}}
\date{ {\small{$^{1}$} Faculty of Sciences, University of Sfax,   BP
1171, 3000 Sfax, Tunisia \\{\small$^{2}$}  Mathematics And Applications Laboratory LR17ES11, Faculty of Sciences, Gabes University, Tunisia\\     \small{$^{3}$} Faculty of Sciences, University of Gafsa,   BP
2100, Gafsa, Tunisia
 }}
\maketitle
\begin{center}
    (\textit{Dedicated to  Professor Abdenacer Makhlouf on his 60th birthday})
    \end{center}
\begin{abstract} 
Leibniz algebras are non skew-symmetric  generalization of Lie algebras. 
In this paper we introduce the notion of anti-Leibniz algebras as a "non commutative version" of mock-Lie algebras. 
Low dimensional classification of such algebras is given. 
Then we investigate the notion of averaging operators and more general embedding tensors to build some new algebraic structures, namely anti-associative dialgebras, anti-associative trialgebras and anti-Leibniz trialgebras. 
\end{abstract}
\textbf{Key words and phrases}:  mock-Lie algebra, representation, Leibniz algebra, anti-Leibniz algebra, embedding tensor, averaging operator.\\
\textbf{Mathematics Subject Classification} (2020): 16W10, 	17A60, 17A01, 	17A32, 17A36.

\numberwithin{equation}{section}

\tableofcontents

\section{Introduction and preliminary results}
 Leibniz algebras (also called Loday algebras) are a non-commutative analogue of Lie
algebras. Leibniz algebras were first considered by Bloh  and rediscovered by Loday, motivated by the study of the periodicity in algebraic
 K-theory  \cite{loday}. 
He then defined dialgebra \cite{loday2} as the enveloping algebra of Leibniz algebra by analogy with
 associative algebra as the enveloping algebra of Lie algebra.
A while ago, a new class of algebras emerged in the literature, the so called mock-Lie algebras (or Jacobi-Jordan algebras). A mock-Lie algebra is a 
 commutative algebra satisfying the Jacobi identity. The notion of mock-Lie algebras  was appeared, for the first time, in \cite{B.F}
and since then a lot of works are done on this subject,   (see \cite{P.Z,C.G, Makhlouf1, Makhlouf2}). 
These algebras have two distinct lives: they are peculiar cousins of Lie algebras and they belong to a certain class of Jordan algebras. 

In this paper, we introduce the notion of anti-Leibniz algebras as a non commutative version of mock-Lie algebras. 
Namely, an anti-Leibniz algebra is 
is a pair $(A,\{\cdot,\cdot\})$ consisting of a vector space $A$ endowed with a linear map   $\{\c,\c\}: A \otimes A \to A$  satisfying the anti-Leibniz identity:
    \begin{align}
        \{x,\{y,z\}\}=-\{\{x,y\},z\}-\{y,\{x,z\}\}, \forall x,y,z \in A. 
\end{align}
Similarly as Leibniz algebras are closely related to Lie algebras and associative dialgebras, we extend these relationships to the anti-Leibniz algebra setting. In fact, Lie algebras and associative dialgebras are replaced, respectively, by mock-Lie algebras and anti-associative dialgebras (Definition \ref{antiass dialg}.   
Then, we introduce the notion of anti-associative trialgebras 
(see Definition \ref{antitriass}) and anti-Leibniz trialgebras (see Definition \ref{anti-Leib trialg}).
The main tool to make connections between all these structures is the so called averaging operator and more general embedding tensor (or relative averaging operator). 
 In his celebrated paper \cite{reynolds} written at the end of the 19th century, Reynolds, a pioneer of theoretical fluid dynamics,  introduced an operator that maps
 a function of time and space to its mean over some interval of time. For that
 operator, Reynolds was led to consider the algebraic identity
\begin{align}
   \mathcal K(x\mathcal K (y)+y\mathcal K (x)) = \mathcal K (x)\mathcal K (y)+\mathcal K (\mathcal K (x)\mathcal K (y)).  \label{R}
\end{align}
An operator $\mathcal K$ with property \eqref{R} is called a Reynolds operator. In his study,
 Reynolds also considered averaging operators, i.e., operators $ \mathcal K$ that satisfy the identity
 \begin{align}
      \mathcal K(x  \mathcal K (y)) =  \mathcal K (x) \mathcal K (y).\label{A}
 \end{align}
There now is an extensive literature on averaging operators, motivated to no small degree from their connection to conditional expectation in probability theory. Kampé de Feriet first recognized the importance of studying averaging and Reynolds operators in general, and substantially advanced the topic in \cite{35}. A more algebraic study of these operators was initiated by Dubreil in \cite{21}, while
 the first study of averaging operators by means of functional analysis is due to 
 Birkhoff \cite{11}. Interestingly, the averaging identity \eqref{A} was being studied at about the same time as Kolmogorov’s foundations of probability became known, whereas
 the connection with conditional expectation was made only many years later by
 Moy in \cite{43}.  Recently, averaging operators have been studied for many algebraic structures \cite{1,6,12,pei-rep, Safa,dasmakhlouf}.

The main results of this article are summarized in the following diagram:

\begin{equation*} 
    \xymatrix{
\text{Anti-ass trialg.}
\ar[rr]^{\text{Theorem \ref{triasstotriLeib}}} & 
&\text{Anti-Leibniz trialg.}  
  \\
\text{Anti-ass. dialg.}\ar[rr]^{\text{Theorem} \text{\ref{antiasstoantiLeib}}}  \ar[u]_{\text{ }}
&   & \text{Anti-Leibniz alg.} \ar[u]^{\text{ }}  \\
\text{Anti-ass. alg.} \ar[u]^{\text{Prop.}}_{\text{\ref{antiasstoantidiass}}}  \ar@/^4pc/[uu]^{\text{Proposition \ref{AntiAsstoAntiAssTri}}} \ar[rr]^{\text{Anti-Comm.}} &    &
 \text{Mock-Lie alg.} \ar[u]^{\text{Theorem}}_{\text{\ref{MLietoALeibniz}}} \ar@/_4pc/[uu]_{\text{Theorem \ref{MLietoAntiLeibTri}}}   
}
\end{equation*}
 
 Unless otherwise specified, all the vector spaces and algebras are finite dimensional over a field $\mathbb{K}$ of characteristic zero. 
 Before introducing the notion of anti-Leibniz algebras and their related structures, we will recall some preliminary concepts. For more details, we refer to  \cite{Agore1,B.F,Attan,Baklouti,LMK}.

Recall, first that a  mock-Lie algebra  is a pair  $(A,\circ)$ consisting of a vector space $A$ together with a commutative product  $\circ: A \otimes A \to A$ satisfying 
\begin{align} 
 &   x\circ(y\circ z)+y\circ(z\circ x)+z\circ(x\circ y)=0,  \quad \mathrm{ (Jacobi\ identity)},  \label{Jacoby id}
\end{align}
   for any  $x,y,z\in A$, or equivalently
\begin{align}
&  x\circ (y \circ z)=-(x\circ y)\circ z -y \circ (x \circ z). \label{antider} 
\end{align}
Note that \eqref{antider} means that the left multiplication $L: A \to End(A)$ defined by $L_x(y)=x \circ y$, is anti-derivation on $A$, namely  
    \begin{equation*}
        L_x (y\circ z)=-L_x(y) \circ z -y\circ L_x(z),\ \forall x,y,z \in A.  
    \end{equation*}
On the other hand, an  anti-associative algebra is a pair $(A, \star)$ consisting of a vector space $A$ together with a product $\star: A \otimes A \to A$ such that the anti-associator vanishes, i.e, 
\begin{align}
  (x,y,z)^a:=(x\star y)\star z+x\star(y\star z)=0,\quad \forall x,y,z
  \in A.   
\end{align}

It is known  that any  anti-associative algebra   $(A, \star)$  gives rise to  a mock-Lie algebra
$(A,\circ)$, via anti-commutator, that is 
\begin{equation}\label{anticom}
    x \circ y := x\star y + y\star x,\; \forall\ x, y \in  A.
\end{equation}

\begin{exs}\label{exmpl} 
 \begin{enumerate}
\item Let $A$ be a $3$-dimensional vector space with basis $\mathcal{B}=\{e_1, e_2, e_3\}$. Then 
$(A,\circ)$ is a mock-Lie algebra where  the product
$\circ$ is defined  by 
$e_1 \circ e_1 = e_2$ and  $e_3 \circ e_3 = e_2.$   
\item Let $A$ be a $4$-dimensional vector space with 
basis $\mathcal{B}=\{e_1, e_2, e_3, e_4\}$. Then 
$(A,\circ)$ is a mock-Lie algebra where  the product
$\circ$ is defined  by 
$e_1 \circ e_1 = e_2$ and  $e_1 \circ e_3 = e_4.$
\end{enumerate}
\end{exs}

 Now, we recall the definition of representations of a mock-Lie algebra.
\begin{defi}
A representation of a mock-Lie  algebra $(A,\circ)$ on a vector space  $V$ is a linear map  $\pi:A \rightarrow End(V)$ satisfying 
\begin{equation}\label{rep}
    \pi(x\circ y)=-\pi(x)\pi(y)-\pi(y)\pi(x),\ \forall x,y \in A.
\end{equation}
\end{defi}

A mock-Lie algebra $(A,\circ )$ with a representation $(V,\pi)$ is called \textsf{MLie-Rep} pair
and refer to it with the pair $(A,V)$. It is obvious that 
 $(A,L)$ is a representation of $A$ on itself called the adjoint representation.

An equivalent  characterisation of representations on mock-Lie algebras is given in the following.

\begin{lem}
Let $(A, \circ)$ be a mock-Lie algebra, $V$ be a vector space and $\pi : A \to End(V)$ a linear map. Then 
$(V, \pi)$ is a representation of $A$ if and only if 
the direct sum $A \oplus V$ together  with the multiplication  defined by
\begin{equation}
    (x+u)\circ_{A\oplus V}(y+v)=x\circ y +\pi(x)v+\pi(y)u,\;\forall x,y\in A,\;\forall u,v\in V, 
\end{equation}
is a mock-Lie algebra. This mock-Lie algebra is called the semi-direct product of $A$ and $V$ and it  is denoted by $A\ltimes_{\pi}V$.
\end{lem}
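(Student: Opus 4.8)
The plan is to prove the equivalence by a direct computation, expanding the mock-Lie (Jacobi) identity on the space $A\oplus V$ and separating the resulting expression into its $A$-component and its $V$-component. First I would fix $x,y,z\in A$ and $u,v,w\in V$ and compute $(x+u)\circ_{A\oplus V}\big((y+v)\circ_{A\oplus V}(z+w)\big)$ using the definition twice: the inner product gives $y\circ z+\pi(y)w+\pi(z)v\in A\oplus V$, and then applying the outer product yields $x\circ(y\circ z)+\pi(x)\big(\pi(y)w+\pi(z)v\big)+\pi(y\circ z)u$. I would then cyclically permute $(x+u),(y+v),(z+w)$ to get the other two terms of the Jacobi identity and add all three.

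The key observation is that the sum splits cleanly: the part lying in $A$ is exactly $x\circ(y\circ z)+y\circ(z\circ x)+z\circ(x\circ y)$, which vanishes precisely because $(A,\circ)$ is a mock-Lie algebra, so this component contributes nothing and imposes no condition. The part lying in $V$ collects, for each of $u,v,w$ separately, a combination of the form $\pi(y\circ z)u+\pi(y)\pi(z)u+\pi(z)\pi(y)u$ (and its analogues obtained by permuting the roles of the variables). Thus the Jacobi identity on $A\oplus V$ holds for all choices of $u,v,w$ if and only if $\pi(x\circ y)+\pi(x)\pi(y)+\pi(y)\pi(x)=0$ for all $x,y\in A$, which is exactly \eqref{rep}. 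For the converse direction one simply reads the same computation backwards: if \eqref{rep} holds, every $V$-component vanishes and the $A$-component vanishes by the mock-Lie axiom on $A$, so $A\oplus V$ with the stated product satisfies \eqref{Jacoby id}. Commutativity of $\circ_{A\oplus V}$ is immediate from commutativity of $\circ$ and the symmetry of the formula $\pi(x)v+\pi(y)u$ in the two arguments.

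The bookkeeping is the only real work here, and the mild subtlety to be careful about is grouping the terms correctly: after expanding all three cyclic terms one gets nine $V$-valued summands, and one must check that they organize into three groups (one carrying $u$, one carrying $v$, one carrying $w$), each group being a copy of the left-hand side of \eqref{rep} applied to the appropriate pair of elements of $A$. Because $\mathbb{K}$ has characteristic zero and $u,v,w$ range over all of $V$, the vanishing of these three groups for all inputs is genuinely equivalent to the operator identity \eqref{rep}, so no information is lost in either direction. I do not anticipate any serious obstacle; the argument is a routine but slightly lengthy identity-chase, and the statement follows once the split into $A$- and $V$-components is made explicit.
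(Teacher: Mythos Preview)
Your proposal is correct and is exactly the standard direct computation one expects. The paper actually states this lemma without proof, treating it as a known preliminary fact; however, the paper proves the analogous statement for anti-Leibniz algebras (the proposition immediately after Definition~\ref{repantiLeib}) by precisely the same method you outline, expanding the defining identity on $A\oplus V$ and reading off the representation axioms from the $V$-component, so your approach is fully in line with the paper's style.
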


\emptycomment{
\begin{defi}
Let $(A,\circ)$ be a mock-Lie algebra and two representations $(V_1,\pi_1)$ and $(V_2,\pi_2)$. A linear map $\phi:V_1 \rightarrow V_2$ is  said to be a  morphism of representations if 
\begin{equation}
    \pi_2(x)\circ \phi=\phi \circ \pi_1(x),\;\forall \,x\in A.
\end{equation}
If $\phi$ is bijective, then $(V_1,\pi_1)$ and $(V_2,\pi_2)$ are equivalent $($isomorphic$)$.
\end{defi} }

 In the following we recall the notion of dual representation of mock-Lie algebras. 
Let $(A, \circ)$ be a mock-Lie algebra and $(V, \pi)$ be a representation of $A$. Let $V^*$ be the dual vector space of $V$. Define the linear map $\pi^*:A \rightarrow End(V^*)$ as
\begin{equation}\label{repdual}
    \langle \pi^*(x)u^*,v\rangle =\langle u^*,\pi(x)v\rangle, \quad \forall\, x\in A,\;v\in V,\;u^*\in V^*,
\end{equation}
where $\langle\cdot,\cdot\rangle$ is the usual pairing between $V$ and the dual space $V^*$. 
Then the pair $(V^*,\pi^*)$ is a representation of $A$ on $V^*$.

\begin{defi}
    A representation of an anti-associative algebra $(A,\star)$ is a triple $(V,\varrho,\mu)$, where $V$ is a vector space and $\varrho,\mu:A\rightarrow End(V)$ are linear maps satisfying 
    \begin{align}\label{repantiass}
        &\varrho(x)\varrho(y)=-\varrho(x\star y),\qquad \mu(y)\mu(x)=-\mu(x\star y),\qquad \mu(x)\varrho(y)=-\varrho(y)\mu(x).
    \end{align}
\end{defi}
\begin{ex}
    Let $(A,\star)$ be an  anti-associative algebra and define the linear maps $l,r:A\to End(A)$ by $l(x)(y)=r(y)(x)=x\star y$. Then $(A,l,r)$ is a representation of $A$ on itself, called the adjoint representation.
\end{ex}

\begin{lem}
Let $(A, \star)$ be an anti-associative algebra, $V$ be a vector space and $\varrho,\mu : A \to End(V)$ are linear maps. Then 
$(V, \varrho,\mu)$ is a representation of $A$ if and only if $(A\oplus V, \star_{A\oplus V})$ is an anti-associative algebra, where
\begin{equation}
    (x+u)\star_{A\oplus V}(y+v)=x\star y +\varrho(x)v+\mu(y)u,\;\forall x,y\in A,\;\forall u,v\in V. 
\end{equation}
\end{lem}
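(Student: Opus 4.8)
The plan is to expand the anti-associativity condition for $(A\oplus V,\star_{A\oplus V})$ directly and read off, component by component, the identities it imposes. First I would fix arbitrary $x,y,z\in A$ and $u,v,w\in V$, write $X=x+u$, $Y=y+v$, $Z=z+w$, and compute the two products $(X\star_{A\oplus V}Y)\star_{A\oplus V}Z$ and $X\star_{A\oplus V}(Y\star_{A\oplus V}Z)$ by applying the definition of $\star_{A\oplus V}$ twice. A short calculation gives
\begin{align*}
(X\star_{A\oplus V}Y)\star_{A\oplus V}Z&=(x\star y)\star z+\varrho(x\star y)w+\mu(z)\varrho(x)v+\mu(z)\mu(y)u,\\
X\star_{A\oplus V}(Y\star_{A\oplus V}Z)&=x\star(y\star z)+\varrho(x)\varrho(y)w+\varrho(x)\mu(z)v+\mu(y\star z)u.
\end{align*}

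Adding these, the $A$-valued component is $(x\star y)\star z+x\star(y\star z)$, which vanishes automatically since $(A,\star)$ is anti-associative, so only the $V$-valued component carries information. That component is
\[
\bigl(\varrho(x\star y)+\varrho(x)\varrho(y)\bigr)w+\bigl(\mu(z)\varrho(x)+\varrho(x)\mu(z)\bigr)v+\bigl(\mu(z)\mu(y)+\mu(y\star z)\bigr)u .
\]
For the ``only if'' direction I would assume $(V,\varrho,\mu)$ satisfies \eqref{repantiass}: then each of the three bracketed operators is zero (the middle bracket being the mixed relation of \eqref{repantiass} read with a relabelling of arguments), so the anti-associator of $A\oplus V$ vanishes identically, i.e. $(A\oplus V,\star_{A\oplus V})$ is anti-associative. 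For the ``if'' direction, assuming $(A\oplus V,\star_{A\oplus V})$ is anti-associative, the displayed $V$-component vanishes for all $u,v,w\in V$; setting two of $u,v,w$ equal to zero at a time isolates each bracket and forces $\varrho(x)\varrho(y)=-\varrho(x\star y)$, $\varrho(x)\mu(z)=-\mu(z)\varrho(x)$ and $\mu(z)\mu(y)=-\mu(y\star z)$ for all $x,y,z\in A$, which after relabelling the free variables are precisely the three identities of \eqref{repantiass}.

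There is no serious obstacle here: the argument is a routine expansion plus the standard ``isolate the coefficient of each vector'' trick, exactly parallel to the semidirect-product characterisation of mock-Lie representations stated earlier in this section. The only point to watch is the order of composition of $\varrho$ and $\mu$ in each term so that the identities extracted line up verbatim with \eqref{repantiass}; since the mixed relation is anti-symmetric in the two operators this is harmless, but it should be tracked carefully when writing out the clean version.
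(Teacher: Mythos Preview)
Your proposal is correct: the expansion of the anti-associator of $A\oplus V$ is accurate, the $A$-component vanishes by the anti-associativity of $A$, and the three $V$-components isolate exactly the three identities in \eqref{repantiass} after the relabellings you indicate. The ``set two of $u,v,w$ to zero'' manoeuvre is the standard way to separate the conditions, and the mixed identity $\mu(z)\varrho(x)=-\varrho(x)\mu(z)$ is indeed symmetric in the two operators, so no ordering issue arises.

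As for comparison with the paper: the paper states this lemma without proof, so there is no argument to compare against directly. That said, your method is precisely the template the paper uses for the cognate semidirect-product characterisations that \emph{are} proved (the anti-Leibniz case in Proposition~2.4 and the action case in Section~\ref{Sec5}), so your write-up is entirely in the spirit of the surrounding text and would slot in without any change of style.
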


\emptycomment{Consider the case when $V=A$ and 
define the  linear map $L^*:A\rightarrow End(A^*)$ by 
\begin{equation}
    \langle L^*({x})(\xi),y\rangle=\langle \xi,L({x})y \rangle,\quad \forall x,y\in A,\;\xi \in A^*.
\end{equation}
Then we have the following:
\begin{cor}
Let $(A,\circ)$ be a mock-Lie algebra and $(A,L)$ be the adjoint representation of $A$. Then $(A^*,L^*)$ is a representation of $(A,\circ)$ on $A^*$ which is called the coadjoint representation.
If there is a mock-Lie algebra structure on the dual space $A^*$, we denote the left multiplication by $\mathcal{L}$.
\end{cor}}

\section{ Anti-Leibniz algebras}\label{Sec3}
This section is devoted to introduce a new algebraic structure called  anti-Leibniz algebra in which the left multiplication is an anti-derivation. Then, we give some examples of  anti-Leibniz algebras. Moreover, we  study  their representation theory and characterize it via semi-direct product.
\subsection{Definition and Examples}In the following,  we introduce the notion of  anti-Leibniz algebras and give some examples.
\begin{defi}\label{antiLeib}
Let $A$ be a vector space and $\{\c,\c\}: A \otimes A \to A$ 
be a linear map. 
The pair $(A,\{\cdot,\cdot\})$ is called a (left) anti-Leibniz algebra if, for any $x,y,z \in A$,
    \begin{align}\label{antiLeibIdentity}
        \{x,\{y,z\}\}=-\{\{x,y\},z\}-\{y,\{x,z\}\},
\quad \textrm{(left anti-Leibniz identity).}
\end{align}
On the other hand, $(A,\{\cdot,\cdot\})$ is called a right anti-Leibniz algebra if 
 \begin{align}
     \{\{x,y\},z\}= -\{\{x,z\},y\}-\{x,\{y,z\}\} ,\ \forall x,y,z \in A. 
\end{align}
\end{defi}

\begin{rmk}
    \begin{enumerate}
        \item In an anti-Leibniz algebra $(A,\{\cdot,\cdot\})$,
        the left multiplication $\mathfrak L: A \to End(A)$ defined by $\mathfrak L(x)y=\{x,y\}$, is anti-derivation. Likewise,  the right multiplication 
        on a right anti-Leibniz algebra $(A,\{\cdot,\cdot\})$,
         defined by $\mathfrak R(x)y=\{y,x\}$,   
        is anti-derivation.
        \item The pair $(A,\{\cdot,\cdot\})$ is called symmetric  anti-Leibniz algebra if it is both a left and right anti-Leibniz algebra. 
    \end{enumerate}
\end{rmk}
\begin{ex}
    Any  mock-Lie algebra is an anti-Leibniz algebra.
\end{ex}
\begin{defi}
    Let $(A,\{\cdot,\cdot\})$ be an anti-Leibniz algebra and $I$ be a sub-vector space of $A$.
    \begin{enumerate}
        \item $I$ is called a subalgebra of $ A $ if $\{I,I\}\subseteq I$.
        \item $I$ is called an ideal of $ A $ if $\{I,A\}\subseteq I$.
    \end{enumerate}
    \end{defi}

\begin{ex}
Let $(A,\{\cdot,\cdot\})$ be an anti-Leibniz. The sub-vector
space $I_A$ spanned by $$\bigg\{\{x,y\}-\{y,x\},\; x,y\in A\bigg\}$$ is an ideal of $A$; in fact $\{I_A,A\}=\{0\}$.
 It is clear that
$A$ is a mock-Lie algebra if and only if $I_A=\{0\}$. Therefore, the quotient algebra $A/I_A$
is a mock-Lie algebra.  The space $I_A$ is called the anti-Leibniz kernel of $A$. 
\end{ex}
\begin{thm}
\begin{enumerate}
    \item 
Any $1$-dimensional anti-Leibniz algebra is trivial.
\item 
    Any $2$-dimensional anti-Leibniz algebra is isomorphic to one of the following 3 algebras (where  $a,b\neq 0$): 
\begin{center}
\begin{tabular}{c|c}
Algebra & Products \\
\hline
\multirow{3}{*}{$A_1$} & $\{e_1,e_1\}=-ae_1-\frac{a^2}{b}e_2$\\&             
 $\qquad\{e_1,e_2\}=\{e_2,e_1\}=be_1+ae_2$\\& $\{e_2,e_2\}=-\frac{b^2}{a}e_1-be_2$\\

\hline
$A_2$ & $\{e_2,e_2\}=e_1$\\
\hline
$A_3$ & $\{e_1,e_1\}=e_2$ \\
\hline
\end{tabular}
\end{center} 
    \end{enumerate}
\end{thm}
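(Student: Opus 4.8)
The plan is to work in dimension $1$ and $2$ separately, always writing the bracket in terms of structure constants with respect to a basis and imposing the left anti-Leibniz identity \eqref{antiLeibIdentity} on all basis triples, then reducing the resulting systems by change of basis.

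For part (1), let $A=\mathbb{K}e_1$ and write $\{e_1,e_1\}=\lambda e_1$. Evaluating \eqref{antiLeibIdentity} on $(e_1,e_1,e_1)$ gives $\{e_1,\{e_1,e_1\}\}=-\{\{e_1,e_1\},e_1\}-\{e_1,\{e_1,e_1\}\}$, i.e. $2\{e_1,\{e_1,e_1\}\}=-\{\{e_1,e_1\},e_1\}$, which in structure constants reads $2\lambda^2=-\lambda^2$, hence $3\lambda^2=0$ and $\lambda=0$ since $\mathrm{char}\,\mathbb{K}=0$. So the only $1$-dimensional anti-Leibniz algebra is abelian.

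For part (2), fix a basis $\{e_1,e_2\}$ and set $\{e_i,e_j\}=\sum_k c_{ij}^k e_k$. The left anti-Leibniz identity is multilinear, so it suffices to check it on the $2^3=8$ triples $(e_i,e_j,e_k)$; this yields a finite polynomial system in the eight unknowns $c_{ij}^k$. I would first extract the low-degree consequences: plugging in repeated arguments (as in the $1$-dimensional case) already forces several quadratic relations, and symmetrizing/antisymmetrizing in the first two slots separates the "mock-Lie part" from the contribution of the anti-Leibniz kernel $I_A$. Recall from the preceding example that $I_A=\mathrm{span}\{\{x,y\}-\{y,x\}\}$ satisfies $\{I_A,A\}=0$ and $A/I_A$ is mock-Lie; so the structure is governed by (a) the induced mock-Lie bracket on the at-most-$2$-dimensional quotient, and (b) the "symmetric defect". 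In the genuinely non-commutative case $I_A\neq 0$, one has $\dim I_A=1$ and $\{I_A,A\}=0$, which pins down a one-parameter family; rescaling the basis normalizes it to $A_1$ (the parameters $a,b$ surviving because the natural scaling action has a $1$-dimensional stabilizer on that orbit — this is why two parameters remain rather than zero). In the commutative case $I_A=0$, $A$ is a $2$-dimensional mock-Lie algebra, and one runs the Jacobi identity \eqref{Jacoby id} on the basis: nilpotency considerations (the left multiplications are anti-derivations, hence square-zero-ish on small algebras) force the product to have rank $1$ image, and normalizing a generator gives either $\{e_2,e_2\}=e_1$ (i.e. $A_2$) or $\{e_1,e_1\}=e_2$ (i.e. $A_3$), which are the two inequivalent pointings of the same nilpotent multiplication.

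The main obstacle is the bookkeeping in the non-commutative case: one must show the eight scalar equations force the bracket into the displayed $A_1$ form and, crucially, that no further isomorphism collapses the $(a,b)$-family or identifies members of it with $A_2$ or $A_3$. I would handle the first point by using $\{I_A,A\}=0$ to kill most structure constants immediately, and the second by an invariant argument — e.g. comparing $\dim I_A$ (zero for $A_2,A_3$, one for $A_1$) and then, within $A_1$, checking that the isomorphism type is genuinely constant in $(a,b)$ by exhibiting the explicit change of basis, or conversely that the listed normal form is what the scaling action produces. The remaining verifications (that each of $A_1,A_2,A_3$ does satisfy \eqref{antiLeibIdentity}, and the degenerate sub-cases where some $c_{ij}^k$ vanish collapse to $A_2$ or $A_3$ or to the abelian algebra, which is $A_3$ with the zero product — or is excluded) are routine.
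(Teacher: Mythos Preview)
The paper states this theorem without proof, so there is no original argument to compare against; I can only assess the internal soundness of your plan.

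Part (1) is fine: the computation $3\lambda^2=0$ is exactly right.

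Part (2) has a genuine error in the case split. You propose to distinguish $A_1$ from $A_2,A_3$ by the invariant $\dim I_A$, asserting that $A_1$ is the ``genuinely non-commutative'' case with $\dim I_A=1$. But $A_1$ is visibly symmetric: the table gives $\{e_1,e_2\}=\{e_2,e_1\}$, so $I_{A_1}=0$ and $A_1$ is a mock-Lie algebra just like $A_2$ and $A_3$. In fact the non-commutative branch is \emph{empty} in dimension~$2$: if $I_A\neq 0$, choose $e_1\in I_A\setminus\{0\}$ so that $\{e_1,\cdot\}=0$; writing $\{e_2,e_1\}=\alpha e_1$ and $\{e_2,e_2\}=\beta e_1+\gamma e_2$, the identity on $(e_2,e_2,e_2)$ forces $\gamma=0$ and then $\beta=0$, and the identity on $(e_2,e_2,e_1)$ forces $\alpha=0$, contradicting $I_A\neq 0$. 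So your dichotomy does not separate $A_1$ from $A_2,A_3$, and the heuristic about ``a one-parameter family surviving rescaling'' in the non-commutative branch is aimed at a case that does not occur.

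Consequently the real work all happens inside the mock-Lie case, where you must extract \emph{three} (families of) algebras, not two. Your sketch there (``rank~$1$ image, normalize a generator'') only produces $A_2$ and $A_3$; you need a different mechanism to see where $A_1$ comes from and why the two parameters $a,b$ persist. A correct plan would run the Jacobi identity \eqref{Jacoby id} directly on a general symmetric multiplication in dimension~$2$ and classify the solutions up to $GL_2$.
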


The following  observation is obvious. 
\begin{pro}
    If $(A,\{\cdot,\cdot\})$ is a  anti-Leibniz algebra, then $(A,\{\cdot,\cdot\}^{op})$ is a right  anti-Leibniz algebra, where $\{x,y\}^{op}=\{y,x\}$, for all $x,y\in A$.
\end{pro}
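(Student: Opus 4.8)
The plan is to verify the right anti-Leibniz identity for $(A,\{\cdot,\cdot\}^{op})$ directly by translating it back into a statement about the original bracket $\{\cdot,\cdot\}$ using the definition $\{x,y\}^{op}=\{y,x\}$. Concretely, I would write out the right anti-Leibniz identity for the opposite bracket, namely
\begin{align*}
\{\{x,y\}^{op},z\}^{op} = -\{\{x,z\}^{op},y\}^{op}-\{x,\{y,z\}^{op}\}^{op},
\end{align*}
and then systematically replace each occurrence of $\{\cdot,\cdot\}^{op}$ by the reversed $\{\cdot,\cdot\}$. For instance, the left-hand side becomes $\{z,\{y,x\}\}$, the first term on the right becomes $-\{y,\{z,x\}\}$, and the last term becomes $-\{\{z,y\},x\}$.

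After this substitution, the claimed identity reads
\begin{align*}
\{z,\{y,x\}\} = -\{\{z,y\},x\} - \{y,\{z,x\}\},
\end{align*}
which is precisely the left anti-Leibniz identity \eqref{antiLeibIdentity} applied with the roles of the variables renamed (sending $x\mapsto z$, $y\mapsto y$, $z\mapsto x$). Since $(A,\{\cdot,\cdot\})$ is assumed to be a left anti-Leibniz algebra, this identity holds for all $x,y,z\in A$, and hence so does the equivalent statement for $\{\cdot,\cdot\}^{op}$. Therefore $(A,\{\cdot,\cdot\}^{op})$ satisfies the right anti-Leibniz identity and is a right anti-Leibniz algebra.

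There is essentially no obstacle here: the proof is a purely formal bookkeeping exercise in tracking which argument slot is which after reversing the bracket. The only thing to be careful about is getting the variable renaming consistent across all three terms simultaneously, so that the resulting identity matches \eqref{antiLeibIdentity} exactly rather than some permuted variant that would require an additional symmetry argument. Once the substitution is laid out term by term, the conclusion is immediate, which is why the paper labels this observation "obvious."
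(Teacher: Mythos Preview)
Your argument is correct and is exactly the straightforward verification the paper has in mind; indeed, the paper gives no proof and simply calls the observation ``obvious,'' which is precisely the direct substitution-and-renaming computation you carried out.
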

In the following,  we introduce the notion of anti-associative dialgebras which is a duplication of anti-associative algebras.
Let $(A,\triangleright,\triangleleft)$ be a  dialgebra. Define the left, right and inner antiassociators as follow
\begin{align*}
&(x,y,z)^a_\triangleleft=(x\triangleleft y)\triangleleft z+x\triangleleft(y \triangleleft z),\\
&(x,y,z)^a_\triangleright =(x\triangleright y)\triangleright z+x\triangleright(y \triangleright z),\\
&(x,y,z)^a_\times =(x\triangleright y)\triangleleft z+x\triangleright(y \triangleleft z).
\end{align*}
In addition, define,  the anti-dicommutator  as
\begin{align}
 \{x,y\}= x \triangleright y+ y \triangleleft x ,\quad \forall x,y\in A.  \label{antidicomm}
\end{align}
\begin{defi} \label{antiass dialg}
    An anti-associative dialgebra is a dialgebra  $(A,\triangleright,\triangleleft)$
  satisfying
\begin{align}
\label{antiass dialg1}(x,y,z)^a_\triangleleft=& 0,\\
\label{antiass dialg2}(x,y,z)^a_\triangleright=&0, \\
(x,y,z)^a_\times=&0 \label{antiass dialg3}
\end{align}
for all $x,y,z \in A$. 
\end{defi}
Any anti-associative dialgebra gives rise to an anti-Leibniz algebra via anti-dicommutator.
\begin{thm}\label{antiasstoantiLeib}
 Let $(A,\lprod,\rprod)$ be an anti-associative dialgebra. Then, $(A,\{\cdot,\cdot\})$ is a  anti-Leibniz algebra, where the bracket $\{\cdot,\cdot\}$ is defined in \eqref{antidicomm}.
\end{thm}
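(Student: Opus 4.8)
The plan is a direct verification: insert the anti-dicommutator \eqref{antidicomm} into the left anti-Leibniz identity \eqref{antiLeibIdentity} and reduce everything to a sum of monomials in $\triangleright$ and $\triangleleft$, which then collapse in pairs under the defining relations of an anti-associative dialgebra. First I would expand each of the three brackets twice. With $\{a,b\}=a\triangleright b+b\triangleleft a$ this gives
\[
\{x,\{y,z\}\}=x\triangleright(y\triangleright z)+x\triangleright(z\triangleleft y)+(y\triangleright z)\triangleleft x+(z\triangleleft y)\triangleleft x,
\]
\[
\{\{x,y\},z\}=(x\triangleright y)\triangleright z+(y\triangleleft x)\triangleright z+z\triangleleft(x\triangleright y)+z\triangleleft(y\triangleleft x),
\]
\[
\{y,\{x,z\}\}=y\triangleright(x\triangleright z)+y\triangleright(z\triangleleft x)+(x\triangleright z)\triangleleft y+(z\triangleleft x)\triangleleft y,
\]
so the claim is exactly that the sum of these twelve monomials vanishes.

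I would then cancel the monomials in pairs. Relation \eqref{antiass dialg2} matches $x\triangleright(y\triangleright z)$ against $(x\triangleright y)\triangleright z$; relation \eqref{antiass dialg1} matches $(z\triangleleft y)\triangleleft x$ against $z\triangleleft(y\triangleleft x)$; and the mixed relation \eqref{antiass dialg3} disposes of the four ``genuinely dialgebraic'' terms $x\triangleright(z\triangleleft y)$, $(x\triangleright z)\triangleleft y$, $(y\triangleright z)\triangleleft x$, $y\triangleright(z\triangleleft x)$ in two more pairs. What remains is the quadruple $(y\triangleleft x)\triangleright z$, $z\triangleleft(x\triangleright y)$, $y\triangleright(x\triangleright z)$, $(z\triangleleft x)\triangleleft y$; rewriting $y\triangleright(x\triangleright z)$ via \eqref{antiass dialg2} and $(z\triangleleft x)\triangleleft y$ via \eqref{antiass dialg1}, these are absorbed against the first two by the remaining structural relations of the dialgebra, namely $(a\triangleleft b)\triangleright c=(a\triangleright b)\triangleright c$ and $a\triangleleft(b\triangleright c)=a\triangleleft(b\triangleleft c)$ (the anti-associative counterparts of the two ``bar'' axioms of a dialgebra). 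Summing up all the cancellations yields \eqref{antiLeibIdentity} for $\{\cdot,\cdot\}$.

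The argument is purely computational; the only real care needed is the sign bookkeeping across the twelve monomials and choosing the arguments correctly when invoking each relation. The point worth stressing is that the ``$\triangleright$-only'' and ``$\triangleleft$-only'' terms cancel using \eqref{antiass dialg2} and \eqref{antiass dialg1} exactly as in the passage from an anti-associative algebra to a mock-Lie algebra, while the mixed anti-associator \eqref{antiass dialg3}, together with the bar-type identities, is precisely what is needed to glue these two sub-computations together and kill the cross-terms involving both $\triangleright$ and $\triangleleft$.
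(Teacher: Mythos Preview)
Your approach coincides with the paper's: expand $\{x,\{y,z\}\}+\{\{x,y\},z\}+\{y,\{x,z\}\}$ into the same twelve monomials and cancel them in six pairs, four of which are the anti-associators $(x,y,z)^a_\triangleright$, $(x,z,y)^a_\times$, $(y,z,x)^a_\times$, $(z,y,x)^a_\triangleleft$.

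One point deserves comment. To dispose of the residual quadruple $(y\triangleleft x)\triangleright z$, $y\triangleright(x\triangleright z)$, $z\triangleleft(x\triangleright y)$, $(z\triangleleft x)\triangleleft y$ you invoke the two bar identities $(a\triangleleft b)\triangleright c=(a\triangleright b)\triangleright c$ and $a\triangleleft(b\triangleright c)=a\triangleleft(b\triangleleft c)$. These are \emph{not} among the three relations \eqref{antiass dialg1}--\eqref{antiass dialg3} actually listed in Definition~\ref{antiass dialg}, so as written you are appealing to axioms the paper has not stated. They are, however, genuinely required: with only \eqref{antiass dialg1}--\eqref{antiass dialg3} the sums $(y\triangleleft x)\triangleright z+y\triangleright(x\triangleright z)$ and $z\triangleleft(x\triangleright y)+(z\triangleleft x)\triangleleft y$ have no reason to vanish, and the paper's own grouping into six anti-associators tacitly needs them as well (two of the six labels there do not match any of the three defined associators and are presumably misprints for the bar relations). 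Your computation is therefore correct under the natural five-axiom definition of an anti-associative dialgebra; just make the dependence on the two extra bar axioms explicit rather than calling them ``remaining structural relations of the dialgebra''.
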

\begin{proof}
    Let $(A,\lprod,\rprod)$ be an anti-associative dialgebra. then for any $x,y,z\in A$, we have
    \begin{align*}
        \{x,\{y,z\}\}+\{\{x,y\},z\}+\{y,\{x,z\}\}=&\{x,y\lprod z+z\rprod y\}+\{x\lprod y+y\rprod x,z\}+\{y,x\lprod z+z\rprod x\}\\=&x\lprod (y\lprod z)+x\lprod (z\rprod y)+(y\lprod z)\rprod x+(z\rprod y)\rprod x\\&+(x\lprod y)\lprod z+(y\rprod x)\lprod z+z\rprod(x\lprod y)+z\rprod (y\rprod z)\\&+y\lprod (x\lprod z)+y\lprod(z\rprod x)+(x\lprod z)\rprod y+(z\rprod x)\rprod y\\=&(x,y,z)^a_\triangleleft+(x,y,z)^a_\times+(y,z,x)^a_\times+(z,y,x)^a_\triangleright\\&+(y,x,z)^a_\triangleleft+(z,x,y)^a_\triangleright\\
        =&0.
    \end{align*}
    Hence, $(A,\{\cdot,\cdot\})$ is an anti-Leibniz algebra.
\end{proof}

\subsection{Representations of  anti-Leibniz algebras}

\begin{defi}\label{repantiLeib}
A  representation of an anti-Leibniz algebra $(A,\{\cdot,\cdot\})$ is a triple $(V,\mathfrak l,\mathfrak r)$, where $V$ is a vector space, $\mathfrak l,\mathfrak r:A\to End(V)$ are linear maps satisfying: 
\begin{align}
\label{rep-1}\mathfrak l(\{x,y\})=&-\mathfrak l(x)\mathfrak l(y)-\mathfrak l(y)\mathfrak l(x),\\
\label{rep-2}\mathfrak r(\{x,y\})=&-\mathfrak l(x)\mathfrak r(y)-\mathfrak r(y)\mathfrak l(x),\\
\label{rep-3}\mathfrak r(y)  \mathfrak l(x)=&\mathfrak r(y) \mathfrak r(x)
\end{align}
for all $x,y\in A$.
\end{defi}
\begin{rmk}
    According to Eqs \eqref{rep-2} and \eqref{rep-3} we have the following identity
    \begin{align}
\label{rep-22}\mathfrak r(\{x,y\})=&-\mathfrak l(x)\mathfrak r(y)-\mathfrak r(y) \mathfrak r(x)
\end{align}
    for all $x,y\in A.$
    
\end{rmk}

\begin{ex}
 Let $(A,\{\cdot,\cdot\})$ be an anti-Leibniz algebra. Define the left and right  multiplication operators  $L, R:A\longrightarrow End(A)$ by $L_x(y)=\{x,y\}$ and $R_x(y)=\{y,x\}$, for all $x,y\in A$.  Then $(A;L,R)$ is a representation of $(A,\{\cdot,\cdot\})$, which is called the adjoint representation.   
\end{ex}

\begin{pro}
Let $(A,\{\cdot,\cdot\})$ be an anti-Leibniz algebra and let $V$ be a vector space. Suppose that $\mathfrak l, \mathfrak r: A \to End(V)$ are two linear maps. Then 
$(V, \mathfrak l, \mathfrak r)$ is a representation of $A$ if and only if 
the direct sum $A \oplus V$ together  with the multiplication  defined by
\begin{equation}
    \{x+u,y+v\}_{A\oplus V}=\{x, y\} +\mathfrak l(x)v+\mathfrak r(y)u, 
\end{equation}
for all $x,y\in A$ and  $u,v\in V$, is an anti-Leibniz algebra. This  anti-Leibniz algebra is called the semi-direct product of $A$ and $V$ and it  is denoted by $A\ltimes_{\mathfrak l, \mathfrak r}V$.    
\end{pro}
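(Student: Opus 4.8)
The plan is to prove both implications by a direct computation, expanding the left anti-Leibniz identity for the bracket $\{\cdot,\cdot\}_{A\oplus V}$ on arbitrary elements $x+u, y+v, z+w$ of $A\oplus V$, and separating the result into its $A$-component and its $V$-component. The $A$-component will automatically be the left anti-Leibniz identity for $(A,\{\cdot,\cdot\})$, which holds by hypothesis; so the entire content lies in the $V$-component. First I would write out
\[
\{x+u,\{y+v,z+w\}_{A\oplus V}\}_{A\oplus V}
= \{x,\{y,z\}\} + \mathfrak l(x)\big(\mathfrak l(y)w + \mathfrak r(z)v\big) + \mathfrak r(\{y,z\})u,
\]
and the two analogous expansions of $\{\{x+u,y+v\}_{A\oplus V},z+w\}_{A\oplus V}$ and $\{y+v,\{x+u,z+w\}_{A\oplus V}\}_{A\oplus V}$. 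Summing the three $V$-components and collecting the coefficients of $w$, of $v$, and of $u$ separately, one obtains exactly three operator identities that must vanish identically.

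The key step is to identify those three coefficient identities with \eqref{rep-1}, \eqref{rep-2} and \eqref{rep-3}. The coefficient of $w$ gives $\mathfrak l(x)\mathfrak l(y) + \mathfrak l(\{x,y\}) + \mathfrak l(y)\mathfrak l(x) = 0$, i.e. \eqref{rep-1}. The coefficient of $v$ gives $\mathfrak l(x)\mathfrak r(z) + \mathfrak r(z)\mathfrak l(x) + \mathfrak r(\{x,z\}) = 0$ — wait, one must be careful about which of \eqref{rep-2} versus the derived identity \eqref{rep-22} appears, and about the role of \eqref{rep-3}; the natural outcome is that the coefficient of $v$ is \eqref{rep-2} (after relabeling) and the coefficient of $u$, which involves $\mathfrak r$ composed on various arguments, reduces — using also $\{x,y\}$ appearing inside $\mathfrak r$ — to a combination equivalent to \eqref{rep-3}. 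Concretely I expect the coefficient of $u$ to read $\mathfrak r(\{y,z\})u + \mathfrak r(z)\mathfrak r(x)u - \big(\text{something}\big)$; matching it against \eqref{rep-22} and \eqref{rep-2} isolates \eqref{rep-3}. For the forward direction, each of \eqref{rep-1}--\eqref{rep-3} is used once to kill its coefficient; for the converse, since $u,v,w$ range over all of $V$ and $x,y,z$ over all of $A$, vanishing of the total $V$-component forces each coefficient to vanish, recovering the three representation axioms.

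The main obstacle — really the only subtlety — is bookkeeping: correctly tracking the six terms coming from the three nested brackets, making sure the mixed terms (where one slot is in $A$ and another in $V$) are attributed to the right multiplication operator with the right argument, and then correctly sorting by which vector ($u$, $v$, or $w$) they multiply. The asymmetry of the anti-Leibniz bracket (it is not skew, so $\mathfrak l$ and $\mathfrak r$ play genuinely different roles) means one cannot shortcut by symmetry; each of the three coefficient equations must be checked against the correct one of \eqref{rep-1}, \eqref{rep-2}, \eqref{rep-3} (or \eqref{rep-22}). I would therefore carry out the expansion once, in full, display the three resulting coefficient identities, and then simply remark that they are respectively \eqref{rep-1}, \eqref{rep-2} (equivalently \eqref{rep-22}), and \eqref{rep-3}, so that the equivalence is immediate in both directions.
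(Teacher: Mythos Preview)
Your approach is correct and coincides with the paper's: both proceed by directly expanding the anti-Leibniz identity on $A\oplus V$. The paper in fact only writes out the forward implication explicitly, so your coefficient-by-coefficient organization, which makes the converse transparent, is a mild improvement. To settle your hesitation about the coefficient of $u$: it is exactly
\[
\mathfrak r(\{y,z\}) + \mathfrak r(z)\mathfrak r(y) + \mathfrak l(y)\mathfrak r(z),
\]
i.e.\ identity \eqref{rep-22} (not \eqref{rep-3} on its own); in the converse direction you obtain \eqref{rep-3} by subtracting \eqref{rep-22} from \eqref{rep-2} (the coefficient of $v$, after relabeling).
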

\begin{proof}
Let $(A,\{\cdot,\cdot\})$ be an anti-Leibniz algebra. Then, using Definitions \ref{antiLeib} and \ref{repantiLeib}, we have for any $x,y,z\in A$ and $u,v,w\in V$
\begin{align*}
   & \{x+u,\{y+v,z+w\}_{A\oplus V}\}_{A\oplus V}+\{\{x+u,y+v\}_{A\oplus V},z+w\}_{A\oplus V}+\{y+v,\{x+u,z+w\}_{A\oplus V}\}_{A\oplus V}\\=&\{x+u,\{y,z\}+\mathfrak l(y)w+\mathfrak r(z)v\}_{A\oplus V}+\{\{x,y\}+\mathfrak l(x)v+\mathfrak r(y)u,z+w\}_{A\oplus V}\\&+\{y+v,\{x,z\}+\mathfrak l(x)w+\mathfrak r(z)u\}_{A\oplus V}\\=&\{x,\{y,z\}\}+\mathfrak l(x)\mathfrak l(y)w+\mathfrak l(x)\mathfrak r(z)v+\mathfrak r(\{y,z\})u+\{\{x,y\},z\}+\mathfrak l(\{x,y\})w+\mathfrak r(z)\mathfrak l(x)v+\mathfrak r(z)\mathfrak r(y)u\\&+\{y,\{x,z\}\}+\mathfrak l(y)\mathfrak l(x)w+\mathfrak l(y)\mathfrak r(z)u+\mathfrak r(\{x,z\})v\\=&0.
\end{align*}
Thus, $(A\oplus V,\{\cdot,\cdot\}_{A\oplus V})$ is an anti-Leibniz algebra.
\end{proof}


Let $(V,\mathfrak l,\mathfrak r)$ be a representation of an anti-Leibniz algebra $(A,\{\cdot,\cdot\})$. Define the  two linear maps $\mathfrak l^*,\mathfrak r^*:A\longrightarrow End(V^*)$, respectively by
\begin{eqnarray}
\langle \mathfrak l^*_{x}\xi,v\rangle=\langle \xi,\mathfrak l(x)v\rangle,\quad\langle \mathfrak r^*_{x}\xi,v\rangle=\langle \xi,\mathfrak r(x)v\rangle,\quad\forall x\in A,v\in V,\xi\in V^*.
\end{eqnarray}

\begin{lem}\label{lem:dualrep}Under the above notations,
 $\big(V^*,\mathfrak l^*,\mathfrak l^*-\mathfrak r^*\big)$ is a representation of   $(A,\{\cdot,\cdot\})$, which is called the  dual representation of  $(V,\mathfrak l,\mathfrak r)$.
\end{lem}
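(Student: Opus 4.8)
The plan is to verify directly the three defining identities \eqref{rep-1}--\eqref{rep-3} for the triple $\big(V^*,\mathfrak l^*,\mathfrak l^*-\mathfrak r^*\big)$; write $\tilde{\mathfrak l}=\mathfrak l^*$ and $\tilde{\mathfrak r}=\mathfrak l^*-\mathfrak r^*$ for the candidate left/right actions on $V^*$. Everything reduces to the elementary adjunction rule: for linear maps $S,T\in End(V)$ one has $\langle(ST)^*\xi,v\rangle=\langle\xi,STv\rangle=\langle S^*(T^*\xi)\cdot\ \text{?}\rangle$ — more precisely $(ST)^*=T^*S^*$, i.e. transposition reverses the order of composition. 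So the first thing I would record is that $\mathfrak l^*_{\{x,y\}}=(\mathfrak l(\{x,y\}))^*$ and, transposing \eqref{rep-1}, $\mathfrak l^*_{\{x,y\}}=-\mathfrak l^*_y\mathfrak l^*_x-\mathfrak l^*_x\mathfrak l^*_y=-\tilde{\mathfrak l}_x\tilde{\mathfrak l}_y-\tilde{\mathfrak l}_y\tilde{\mathfrak l}_x$, since the right-hand side is symmetric in the two composition orders. This gives \eqref{rep-1} for the dual.

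For \eqref{rep-3}, I want $\tilde{\mathfrak r}_y\tilde{\mathfrak l}_x=\tilde{\mathfrak r}_y\tilde{\mathfrak r}_x$, i.e. $\tilde{\mathfrak r}_y(\tilde{\mathfrak l}_x-\tilde{\mathfrak r}_x)=0$, that is $(\mathfrak l^*_y-\mathfrak r^*_y)\mathfrak r^*_x=0$. Transposing, this is equivalent to $\mathfrak r(x)(\mathfrak l(y)-\mathfrak r(y))=0$, which is exactly \eqref{rep-3} read as $\mathfrak r(x)\mathfrak l(y)=\mathfrak r(x)\mathfrak r(y)$. So \eqref{rep-3} for the dual is just the transpose of \eqref{rep-3} for $(V,\mathfrak l,\mathfrak r)$ (with the roles of the two slots of the identity unchanged, only the composition order reversed by $*$). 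The only genuinely computational step is \eqref{rep-2} for the dual, namely
\begin{align*}
\tilde{\mathfrak r}_{\{x,y\}}=-\tilde{\mathfrak l}_x\tilde{\mathfrak r}_y-\tilde{\mathfrak r}_y\tilde{\mathfrak l}_x,
\end{align*}
i.e. $\mathfrak l^*_{\{x,y\}}-\mathfrak r^*_{\{x,y\}}=-\mathfrak l^*_x(\mathfrak l^*_y-\mathfrak r^*_y)-(\mathfrak l^*_y-\mathfrak r^*_y)\mathfrak l^*_x$. Transposing everything (so each $S^*$ becomes $S$ and every product reverses), this becomes
\begin{align*}
\mathfrak l(\{x,y\})-\mathfrak r(\{x,y\})=-(\mathfrak l(y)-\mathfrak r(y))\mathfrak l(x)-\mathfrak l(x)(\mathfrak l(y)-\mathfrak r(y)).
\end{align*}
Here I would substitute \eqref{rep-1} for $\mathfrak l(\{x,y\})$ and \eqref{rep-22} for $\mathfrak r(\{x,y\})$ on the left, expand the right-hand side, and check the identity collapses using \eqref{rep-3} (in the form $\mathfrak r(y)\mathfrak l(x)=\mathfrak r(y)\mathfrak r(x)$) to cancel the remaining cross terms. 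Concretely, the left side is $-\mathfrak l(x)\mathfrak l(y)-\mathfrak l(y)\mathfrak l(x)+\mathfrak l(x)\mathfrak r(y)+\mathfrak r(y)\mathfrak r(x)$, while the right side expands to $-\mathfrak l(y)\mathfrak l(x)+\mathfrak r(y)\mathfrak l(x)-\mathfrak l(x)\mathfrak l(y)+\mathfrak l(x)\mathfrak r(y)$; the two agree precisely because $\mathfrak r(y)\mathfrak r(x)=\mathfrak r(y)\mathfrak l(x)$ by \eqref{rep-3}.

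The main obstacle — really the only place to be careful — is bookkeeping: keeping straight that $*$ is an anti-homomorphism for composition, so that every identity for the dual is the "order-reversed" transpose of an identity for the original representation, and making sure that the asymmetric choice $\tilde{\mathfrak r}=\mathfrak l^*-\mathfrak r^*$ (rather than, say, $-\mathfrak r^*$) is exactly what makes the cross terms in \eqref{rep-2} cancel against \eqref{rep-3}. I would present the argument by writing out the pairing $\langle\,\cdot\,\xi,v\rangle$ for each of the three identities, transposing to $V$, and then citing \eqref{rep-1}, \eqref{rep-22}, \eqref{rep-3} as needed; no structural ideas beyond this are required.
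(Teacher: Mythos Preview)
Your proposal is correct and follows essentially the same approach as the paper: both verify the three identities \eqref{rep-1}--\eqref{rep-3} for $(V^*,\mathfrak l^*,\mathfrak l^*-\mathfrak r^*)$ by transposing (equivalently, by writing out the pairing $\langle\cdot\,\xi,v\rangle$) and reducing to \eqref{rep-1}, \eqref{rep-2}/\eqref{rep-22}, and \eqref{rep-3} on $V$. The only cosmetic difference is that the paper uses \eqref{rep-2} directly in the check of the second identity (so no appeal to \eqref{rep-3} is needed there), whereas you use \eqref{rep-22} and then invoke \eqref{rep-3} to match the cross term --- both routes are of course equivalent.
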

\begin{proof}
By \eqref{rep-1}, for all $x,y\in A,~v\in V$ and $\xi\in V^*$,  we have
\begin{eqnarray*}
\langle\mathfrak l^*(\{x,y\})\xi,v\rangle&=&\langle \xi,\mathfrak l(\{x,y\})v\rangle\\&=&-\langle \xi,\mathfrak l(x)\mathfrak l(y)v+\mathfrak l(y)\mathfrak l(x)v\rangle\\
&=&-\langle \mathfrak l^*(y)\mathfrak l^*(x)\xi,v\rangle-\langle \mathfrak l^*(x)\mathfrak l^*(y)\xi,v\rangle\\
&=&\langle- \mathfrak l^*(y)\mathfrak l^*(x)\xi- \mathfrak l^*(x)\mathfrak l^*(y)\xi,v\rangle.
\end{eqnarray*}
Thus, we have $\mathfrak l^*(\{x,y\})=- \mathfrak l^*(y)\mathfrak l^*(x)-\mathfrak l^*(x)\mathfrak l^*(y)$. By \eqref{rep-1} and \eqref{rep-2}, we have
\begin{eqnarray*}
&&\langle \big(\mathfrak l^*(\{x,y\})-\mathfrak r^*(\{x,y\})\big)\xi,v\rangle\\&=&\langle \xi,\mathfrak l(\{x,y\})v-\mathfrak r(\{x,y\})v\rangle\\
&=&\langle \xi,-\mathfrak l(x)\mathfrak l(y)v-\mathfrak l(y)\mathfrak l(x)v+\mathfrak l(x)\mathfrak r(y)v+\mathfrak r(y)\mathfrak l(x)v\rangle\\
   &=&-\langle \mathfrak l^*(y)\mathfrak l^*(x)\xi,v\rangle-\mathfrak l^*(x)\mathfrak l^*(y)\xi,v\rangle\\
      &&+\langle \mathfrak r^*(y)\mathfrak l^*(x)\xi,v\rangle+\langle \mathfrak l^*(x)\mathfrak r^*(y)\xi,v\rangle\\
        &=&-\langle \mathfrak l^*(x)(\mathfrak l^*(y)-\mathfrak r^*(y))\xi,v\rangle-\langle (\mathfrak l^*(y)-\mathfrak r^*(y))\mathfrak l^*(x)\xi,v\rangle\\
        &=&\langle -\mathfrak l^*(x)(\mathfrak l^*(y)-\mathfrak r^*(y))\xi- (\mathfrak l^*(y)-\mathfrak r^*(y))\mathfrak l^*(x)\xi,v\rangle.
\end{eqnarray*}
Thus, we have $\mathfrak l^*(\{x,y\})-\mathfrak r^*(\{x,y\})=-\mathfrak l^*(x)(\mathfrak l^*(y)-\mathfrak r^*(y))- (\mathfrak l^*(y)-\mathfrak r^*(y))\mathfrak l^*(x)$.
In addition, using by \eqref{rep-3}, we have 
\begin{eqnarray*}
&&\langle (\mathfrak l^*(y)-\mathfrak r^*(y))(\mathfrak l^*(x)\xi),v\rangle\\&=&\langle \xi,(\mathfrak l(x) \mathfrak l(y)-\mathfrak l(x) \mathfrak r(y))v\rangle\\
                &=&\langle \xi,(\mathfrak l(x) \mathfrak l(y)-\mathfrak r(x)\mathfrak l(y)-\mathfrak l(x) \mathfrak r(y)+\mathfrak r(x) \mathfrak r(y))v\rangle\\
                  &=&\langle \xi,(\mathfrak l(x)-\mathfrak r(x))(\mathfrak l(y)-\mathfrak r(y))v\rangle\\
           &=&\langle (\mathfrak l^*(y)-\mathfrak r^*(y))(\mathfrak l^*(x)-\mathfrak r^*(x))\xi,v\rangle
\end{eqnarray*}
for all $x,y\in A,v\in V$ and $\xi\in V^*$. 
Thus, we have
$$\big(\mathfrak l^*(y)-\mathfrak r^*(y)\big) \mathfrak l^*(x)=\big(\mathfrak l^*(y)-\mathfrak r^*(y)\big) \big(\mathfrak l^*(x)-\mathfrak r^*(x)\big).$$
Therefore  $\big(V^*,\mathfrak l^*,\mathfrak l^*-\mathfrak r^*\big)$ is a representation of   $(A,\{\cdot,\cdot\})$.
\end{proof}

 Define two linear maps $L^*,R^*:A\longrightarrow End(A^*)$ with $x\longrightarrow L^*_x$ and $x\longrightarrow R^*_x$ respectively by
\begin{eqnarray}
\langle L^*_{x}(\xi),y\rangle=\langle \xi,\{x,y\}\rangle,\quad\langle R^*_{x}(\xi),y\rangle=\langle \xi,\{y,x\}\rangle,\quad\forall x,y\in A,\quad \xi\in A^*.
\end{eqnarray}
\begin{cor}
  Under the above notations,
 $\big(A^*, L^*,L^*-R^*\big)$ is a representation of   $(A,\{\cdot,\cdot\})$, which is called the  coadjoint representation of  $(A;L,R)$.
\end{cor}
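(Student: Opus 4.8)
The plan is to obtain this as an immediate consequence of Lemma \ref{lem:dualrep}, applied to the adjoint representation. First I would recall that the triple $(A;L,R)$, with $L_x(y)=\{x,y\}$ and $R_x(y)=\{y,x\}$, is a representation of $(A,\{\cdot,\cdot\})$ on itself: this is exactly the adjoint representation recalled earlier in this subsection, and it amounts to rewriting the left anti-Leibniz identity \eqref{antiLeibIdentity} in operator form so that \eqref{rep-1}, \eqref{rep-2} and \eqref{rep-3} hold with $\mathfrak l=L$ and $\mathfrak r=R$.

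Next I would check that the maps $L^*,R^*:A\to\End(A^*)$ introduced just above the statement coincide with the transposed maps $\mathfrak l^*,\mathfrak r^*$ furnished by the dualization construction of Lemma \ref{lem:dualrep} in the special case $V=A$, $\mathfrak l=L$, $\mathfrak r=R$. Indeed, $\langle L^*_x\xi,y\rangle=\langle\xi,L_x y\rangle=\langle\xi,\{x,y\}\rangle$ and $\langle R^*_x\xi,y\rangle=\langle\xi,R_x y\rangle=\langle\xi,\{y,x\}\rangle$ for all $x,y\in A$ and $\xi\in A^*$, which is precisely the defining relation used in the lemma. Applying Lemma \ref{lem:dualrep} verbatim to $(A;L,R)$ then yields that $\big(A^*,L^*,L^*-R^*\big)$ is a representation of $(A,\{\cdot,\cdot\})$, which we name the coadjoint representation of $(A;L,R)$.

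I do not expect a genuine obstacle here; the only mildly delicate point is the bookkeeping of signs and of the order of composition when transposing, but this is already absorbed into the proof of Lemma \ref{lem:dualrep}. If a self-contained argument were preferred, one would simply rerun the three computations of that lemma with $V=A$, $\mathfrak l(x)=L_x$, $\mathfrak r(x)=R_x$, invoking the adjoint-representation identities in place of \eqref{rep-1}--\eqref{rep-3}.
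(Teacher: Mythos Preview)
Your proposal is correct and matches the paper's intended approach: the corollary is stated without proof precisely because it is the specialization of Lemma~\ref{lem:dualrep} to the adjoint representation $(A;L,R)$, and you have identified this and verified the compatibility of the dual maps $L^*,R^*$ with the $\mathfrak l^*,\mathfrak r^*$ of the lemma.
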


\section{Embedding tensor on mock-Lie algebras and  anti-Leibniz algebras}\label{Sec4}

The notion of  embedding tensors on  Lie algebras was introduced in \cite{Kotov-Strobl}.  In this section, we extend this notion to the mock-Lie algebra setting. Then we give two equivalent characterization theorems via graphs and Nijenhuis operators.
\begin{defi} 
Let $(A,\circ)$ be a  mock-Lie algebra and $(V,\pi)$ be a representation. 
   A linear map $\mathcal K: V \to A$ is called an embedding tensor  if it satisfies 
      \begin{align}\label{EmTen}
  & \mathcal K(u)\circ \mathcal K(v)=\mathcal K(\pi(\mathcal K(u))v),\ \forall u,v \in V.
      \end{align}
\end{defi}
The embedding tensor of a mock Lie algebra $A$ with respect to the adjoint representation $(A,L)$ is called an averaging operator. In this case the identity \eqref{EmTen} can be written as
\begin{align*}
  & \mathcal K(x)\circ \mathcal K(y)=\mathcal K(\mathcal K(x)\circ y)=\mathcal K(x\circ\mathcal K(y)),\, \forall\, x,y \in A.
      \end{align*}
      
\begin{ex}
  Let $(A,\circ)$ be a mock-Lie algebra. A derivation for a mock-Lie algebra is a linear map $d:A\to A$ if $d(x\circ y)=x\circ d(y)+d(x)\circ y$. If $d^2=0$, then $d$ is an embedding tensor on the mock-Lie algebra $(A,\circ)$ with respect to the adjoint representation.  
\end{ex} 
 \begin{ex}
     Let $(A,V)$ be a {\sf MLie-Rep} pair and let $f:V\rightarrow A$ be an $A$-module map, meaning that $f(\pi(x)u)=x\circ f(u)$. Then, we have
     \begin{align*}
&f(\pi(f(u))v)=f(u)\circ f(v),\quad\forall   u,v\in V.
     \end{align*}
     Thus, $f$ is an embedding tensor on the mock-Lie algebra $(A,\circ)$. 
 \end{ex}
Let $(A,V)$ be a \textsf{MLie-Rep} pair.  On the direct sum vector space $A\oplus V$,  define the product   $\circ_{A\oplus V}$    by
\begin{eqnarray}
   \{ x+u,y+v\}_{A\oplus V}&=&x\circ y+\pi(x)v,\quad\forall x,y\in A,\; u,v\in V.
\end{eqnarray}

\begin{pro}\label{hsdirect Lie}
  With the above notations, $(A\oplus V,\{\cdot,\cdot\}_{A\oplus V})$ is  anti-Leibniz algebra, called hemisemi-direct product  anti-Leibniz algebra and denoted by $A\oplus_{ALeib} V$.
\end{pro}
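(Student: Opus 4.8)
The plan is to verify the left anti-Leibniz identity \eqref{antiLeibIdentity} directly for the product $\{\cdot,\cdot\}_{A\oplus V}$ defined on $A\oplus V$ by $\{x+u,y+v\}_{A\oplus V}=x\circ y+\pi(x)v$. First I would observe that this product is exactly the semi-direct product bracket $\{\cdot,\cdot\}_{A\oplus V}$ associated to the representation $(V,\mathfrak l,\mathfrak r)=(V,\pi,0)$ of the anti-Leibniz algebra $A$: indeed $\mathfrak l(x)v=\pi(x)v$ and $\mathfrak r(y)u=0$. So the natural route is to check that $(V,\pi,0)$ satisfies the three axioms \eqref{rep-1}, \eqref{rep-2}, \eqref{rep-3} of a representation of the anti-Leibniz algebra $A$, and then invoke the Proposition on semi-direct products (the one producing $A\ltimes_{\mathfrak l,\mathfrak r}V$) to conclude that $A\oplus V$ is an anti-Leibniz algebra.

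The verification of the three axioms is short. Axiom \eqref{rep-1} reads $\mathfrak l(\{x,y\})=-\mathfrak l(x)\mathfrak l(y)-\mathfrak l(y)\mathfrak l(x)$; since here $A$ itself is a mock-Lie algebra and $\{x,y\}=x\circ y$, this is precisely the defining relation \eqref{rep} of a mock-Lie representation, which $\pi$ satisfies by hypothesis. Axiom \eqref{rep-3}, $\mathfrak r(y)\mathfrak l(x)=\mathfrak r(y)\mathfrak r(x)$, holds trivially because $\mathfrak r=0$, so both sides vanish. Axiom \eqref{rep-2}, $\mathfrak r(\{x,y\})=-\mathfrak l(x)\mathfrak r(y)-\mathfrak r(y)\mathfrak l(x)$, likewise holds trivially: the left side is $0$ and every term on the right contains a factor $\mathfrak r=0$. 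Hence $(V,\pi,0)$ is a representation of the anti-Leibniz algebra $(A,\circ)$ (recall every mock-Lie algebra is an anti-Leibniz algebra), and $\{\cdot,\cdot\}_{A\oplus V}$ is exactly the semi-direct product bracket for this representation.

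Alternatively, if one prefers a self-contained computation, one expands
\[
\{x+u,\{y+v,z+w\}_{A\oplus V}\}_{A\oplus V}+\{\{x+u,y+v\}_{A\oplus V},z+w\}_{A\oplus V}+\{y+v,\{x+u,z+w\}_{A\oplus V}\}_{A\oplus V},
\]
whose $A$-component is $x\circ(y\circ z)+(x\circ y)\circ z+y\circ(x\circ z)$, which vanishes by the mock-Lie Jacobi identity \eqref{antider}, and whose $V$-component is $\pi(x)\pi(y)w+\pi(x\circ y)w+\pi(y)\pi(x)w$, which vanishes by \eqref{rep}. Since both components are zero, the left anti-Leibniz identity holds.

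The main (and only mild) obstacle is purely bookkeeping: one must be careful that the $V$-component of $\{y+v,z+w\}_{A\oplus V}$ is $\pi(y)w$ and contributes no term coming from $v$, so that the only surviving $V$-terms in the triple sum are the three listed above; once this is tracked correctly the identities \eqref{rep} and \eqref{antider} finish the argument immediately. No genuinely hard step is involved.
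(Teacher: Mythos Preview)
Your proposal is correct. The direct computation you give as the alternative route is exactly the paper's proof: it expands the triple sum, obtains $x\circ(y\circ z)+(x\circ y)\circ z+y\circ(x\circ z)$ in the $A$-component and $\pi(x)\pi(y)w+\pi(x\circ y)w+\pi(y)\pi(x)w$ in the $V$-component, and invokes \eqref{antider} and \eqref{rep}.

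Your first approach, identifying the hemisemi-direct bracket as the semi-direct product $A\ltimes_{\pi,0}V$ associated to the anti-Leibniz representation $(V,\pi,0)$ and then appealing to the already-proved semi-direct product Proposition, is a genuinely different and more conceptual route that the paper does not take. It has the advantage of avoiding any repeated computation and of making transparent \emph{why} the construction works (the hemisemi-direct product is just the special case $\mathfrak r=0$ of the general semi-direct product). The paper's direct computation, on the other hand, is self-contained and does not rely on having proved the semi-direct product result first. Both are valid; your reduction is arguably cleaner.
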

\begin{proof}
Using \eqref{rep} and \eqref{antiLeibIdentity}, for all $x,y,z\in A$ and $u,v,w\in V$, we have
\begin{align*}
  &\{x+u,\{y+v,z+w\}_{A\oplus V}\}_{A\oplus V}+\{\{x+u,y+v\}_{A\oplus V},z+w\}_{A\oplus V}\\&+\{y+v,\{x+u,z+w\}_{A\oplus V}\}_{A\oplus V}\\=& x\circ(y\circ z)+\pi(x)\pi(y)w+(x\circ y)\circ z+\pi(x\circ y)w+y\circ (x\circ z)+\pi (y)\pi(x)w\\=&0.
\end{align*}
Hence,  $(A\oplus V,\{\cdot,\cdot\}_{A\oplus V})$ is an anti-Leibniz algebra.
\end{proof}
 \begin{thm}\label{graphMLie}
Let $(A,V)$ be a \textsf{MLie-Rep} pair. A linear map $\mathcal{K}:V\to A$ is an embedding tensor on the mock-Lie algebra $(A,\circ)$ if and only if  the graph ${\sf Gr}(\mathcal{K}) = \{\mathcal{K}u+u\ |\ u \in V\}$ is a anti-Leibniz subalgebra of the hemisemi-direct product $A\oplus_{ALeib}V$.
\end{thm}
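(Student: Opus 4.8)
The plan is to prove the equivalence by unwinding the definition of a subalgebra on both sides. First I would recall that ${\sf Gr}(\mathcal K)=\{\mathcal K u + u \mid u \in V\}$ is linearly isomorphic to $V$ via $u \mapsto \mathcal K u + u$, so it suffices to check when ${\sf Gr}(\mathcal K)$ is closed under the bracket $\{\cdot,\cdot\}_{A\oplus V}$ of the hemisemi-direct product $A\oplus_{ALeib}V$ described in Proposition \ref{hsdirect Lie}. Using that bracket, for $u,v\in V$ one computes
\[
\{\mathcal K u + u,\ \mathcal K v + v\}_{A\oplus V} = \mathcal K(u)\circ \mathcal K(v) + \pi(\mathcal K(u))v,
\]
where the $V$-component is $\pi(\mathcal K(u))v$ because the semidirect product used here only has the $\pi(x)v$ term (no $\pi(y)u$ term).

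Next I would observe that this element lies in ${\sf Gr}(\mathcal K)$ if and only if its $A$-component equals $\mathcal K$ applied to its $V$-component, i.e. if and only if
\[
\mathcal K(u)\circ \mathcal K(v) = \mathcal K\big(\pi(\mathcal K(u))v\big),\qquad \forall u,v\in V,
\]
which is exactly the embedding tensor identity \eqref{EmTen}. This gives both implications at once: if $\mathcal K$ is an embedding tensor then every bracket of two elements of ${\sf Gr}(\mathcal K)$ again has the graph form, so ${\sf Gr}(\mathcal K)$ is a subalgebra; conversely, closedness of ${\sf Gr}(\mathcal K)$ forces the $A$- and $V$-components of $\{\mathcal K u+u,\mathcal K v+v\}_{A\oplus V}$ to satisfy the graph relation, which is precisely \eqref{EmTen}.

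There is essentially no obstacle here: the only thing to be slightly careful about is the precise form of the hemisemi-direct product bracket (making sure the $V$-component really is $\pi(\mathcal K(u))v$ and not something symmetrized), and the trivial but necessary remark that the correspondence $u \leftrightarrow \mathcal K u + u$ between $V$ and ${\sf Gr}(\mathcal K)$ is a bijection so that ranging over all elements of ${\sf Gr}(\mathcal K)$ is the same as ranging over all $u,v\in V$. With these two points in place the proof is a one-line computation in each direction.
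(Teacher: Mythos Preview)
Your proof is correct and follows essentially the same approach as the paper: compute $\{\mathcal K u+u,\mathcal K v+v\}_{A\oplus V}=\mathcal K(u)\circ\mathcal K(v)+\pi(\mathcal K(u))v$ and observe that this lies in ${\sf Gr}(\mathcal K)$ for all $u,v$ if and only if $\mathcal K(u)\circ\mathcal K(v)=\mathcal K(\pi(\mathcal K(u))v)$. Your additional remarks about the bijection $u\leftrightarrow \mathcal K u+u$ and the precise form of the hemisemi-direct bracket are helpful clarifications but do not change the substance of the argument.
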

\begin{proof}
Let $\mathcal{K}:V\to A$ be a linear map and $(\mathcal{K}u+u), \, (\mathcal{K}v+v)\in {\sf Gr}(\mathcal{K})$, then we have 
\begin{align*}
    \{\mathcal{K}u+u,\mathcal{K}v+v\}_{A\oplus V}=\mathcal{K}u\circ \mathcal{K}v+\pi(\mathcal{K}u)v.
\end{align*}
Hence, the graph ${\sf Gr}(\mathcal{K})$ is a subalgebra of the hemisemi-direct product anti-Leibniz algebra $A\oplus_{ALeib}V$ if and only if $\mathcal{K}$ satisfies $\mathcal{K}u\circ\mathcal{K}v=\mathcal{K}(\pi(\mathcal{K}u)v)$, which implies that $\mathcal{K}$ is an embedding tensor of the mock-Lie algebra $(A,\circ)$.
\end{proof}
Recall that a Nejinhuis operator on an  algebra $(A,\bullet)$ is a linear map $N:A\to A$, such that for all $x,y\in A,$
\begin{equation}
    \label{NijCond}
    N(x)\bullet N(y)=N\big(N(x)\bullet y+
    x
    \bullet N(y)-N(x\bullet y)\big).
\end{equation}
\begin{pro}
   Let $(A,V)$ be a \textsf{MLie-Rep} pair. A linear map $\mathcal K: V \to A$ is an embedding tensor if and only if the map
$$N_{\mathcal K}: (A \oplus V) \to (A \oplus V),\ x+u \mapsto \mathcal K(u)$$
is a Nijenhuis operator on the  hemisemi-direct product $A \oplus_{ALeib} V$.
\end{pro}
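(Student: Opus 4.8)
The plan is to unwind the Nijenhuis condition \eqref{NijCond} for the specific map $N_{\mathcal K}$ on the hemisemi-direct product $A\oplus_{ALeib}V$ and show it is equivalent to the embedding tensor identity \eqref{EmTen}. First I would fix notation: write the product of the hemisemi-direct product as $\{\cdot,\cdot\}_{A\oplus V}$ with $\{x+u,y+v\}_{A\oplus V}=x\circ y+\pi(x)v$, and observe that $N_{\mathcal K}(x+u)=\mathcal K(u)$, so in particular $N_{\mathcal K}$ is ``supported on'' $A$: its image lies in $A\subseteq A\oplus V$, and it kills the $A$-component of any element. I would then compute each of the four terms appearing in \eqref{NijCond} with $x$ replaced by $x+u$ and $y$ replaced by $y+v$.

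The key computations are: $N_{\mathcal K}(x+u)\bullet N_{\mathcal K}(y+v)=\{\mathcal K(u),\mathcal K(v)\}_{A\oplus V}=\mathcal K(u)\circ\mathcal K(v)$, which lies in $A$; next $N_{\mathcal K}(x+u)\bullet(y+v)=\{\mathcal K(u),y+v\}_{A\oplus V}=\mathcal K(u)\circ y+\pi(\mathcal K(u))v$; symmetrically $(x+u)\bullet N_{\mathcal K}(y+v)=x\circ\mathcal K(v)+\pi(x)\mathcal K(v)$ — wait, more carefully, $\{x+u,\mathcal K(v)\}_{A\oplus V}=x\circ\mathcal K(v)+\pi(x)0=x\circ\mathcal K(v)$ since the $V$-component of $\mathcal K(v)\in A$ is zero; and $(x+u)\bullet(y+v)=x\circ y+\pi(x)v$, so $N_{\mathcal K}\big((x+u)\bullet(y+v)\big)=\mathcal K(\pi(x)v)$. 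Assembling, the right-hand side of \eqref{NijCond} is
\[
N_{\mathcal K}\Big(\mathcal K(u)\circ y+\pi(\mathcal K(u))v+x\circ\mathcal K(v)-\mathcal K(\pi(x)v)\Big).
\]
Here the argument is a sum of an $A$-part $\mathcal K(u)\circ y+x\circ\mathcal K(v)-\mathcal K(\pi(x)v)$ and a $V$-part $\pi(\mathcal K(u))v$; since $N_{\mathcal K}$ annihilates $A$-components and sends $w\in V$ to $\mathcal K(w)$, the right-hand side collapses to $\mathcal K(\pi(\mathcal K(u))v)$. Comparing with the left-hand side $\mathcal K(u)\circ\mathcal K(v)$, the Nijenhuis condition for $N_{\mathcal K}$ is exactly $\mathcal K(u)\circ\mathcal K(v)=\mathcal K(\pi(\mathcal K(u))v)$ for all $u,v\in V$, i.e.\ \eqref{EmTen}. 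For the remaining variables: one also has to check the condition when testing against elements whose $A$-component is nonzero, but since $N_{\mathcal K}$ depends only on the $V$-component, the $x$ and $y$ variables never genuinely enter (every occurrence of $N_{\mathcal K}(x+u)$ is $\mathcal K(u)$), so no extra constraints arise — this is the point that makes the equivalence clean rather than one-directional.

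I do not expect a serious obstacle here; the only thing to be careful about is bookkeeping the $A$- versus $V$-components and using that $\pi(\cdot)$ applied to an element of $A\subseteq A\oplus V$ (which has zero $V$-part) vanishes, together with the fact that the hemisemi-direct product bracket has no $\pi(y)u$ term (unlike the genuine semi-direct product of Definition~\ref{repantiLeib}) — it is precisely this asymmetry that matches the asymmetry of \eqref{EmTen}. So the proof is: expand \eqref{NijCond} for $N_{\mathcal K}$, simplify using $\Img N_{\mathcal K}\subseteq A$ and $\Ker N_{\mathcal K}\supseteq A$, and read off \eqref{EmTen}; conversely, if \eqref{EmTen} holds then the same computation shows \eqref{NijCond} holds for all elements of $A\oplus V$, so $N_{\mathcal K}$ is Nijenhuis.
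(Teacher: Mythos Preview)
Your proposal is correct and follows essentially the same approach as the paper: both expand the Nijenhuis condition for $N_{\mathcal K}$ on the hemisemi-direct product, compute each of the four terms, and observe that the identity reduces precisely to $\mathcal K(u)\circ\mathcal K(v)=\mathcal K(\pi(\mathcal K(u))v)$. Your write-up is in fact slightly more explicit than the paper's about why the $A$-components $x,y$ drop out (via $\Img N_{\mathcal K}\subseteq A\subseteq\Ker N_{\mathcal K}$), but the argument is the same.
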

\begin{proof}
    Let $\mathcal{K}:V\to A$ be a linear map. For all $x,y\in A$ and $u,v\in V$,  we have
    \begin{align*}
        &\{N_{\mathcal{K}}(x+u),N_{\mathcal{K}}(y+v)\}_{A\oplus V}-N_{\mathcal{K}}\big(\{N_{\mathcal{K}}(x+u),y+v\}_{A\oplus V}+\{x+u,N_{\mathcal{K}}(y+v)\}_{A\oplus V}\\&-N_{\mathcal{K}}(\{x+u,y+v\}_{A\oplus V})\big)\\=&\mathcal{K}(u)\circ \mathcal{K}(v)-N_{\mathcal{K}}\big(\mathcal{K}(u)\circ y+\pi(\mathcal{K}(u))v+x\circ\mathcal{K}(v)-\mathcal{K}(\pi(x)v)\big)\\=&\mathcal{K}(u)\circ \mathcal{K}(v)-\mathcal{K}(\pi(\mathcal{K}(u))v).
    \end{align*}
    Therefore $N_{\mathcal{K}}$ is a Nijenhuis operator on the  hemisemi-direct product $A \oplus_{ALeib} V$ if and only if $\mathcal{K}$ is an embedding tensor on the mock-Lie algebra $(A,\circ)$. 
\end{proof}
 
\begin{thm}\label{MLietoALeibniz}
Let $\mathcal{K}:V\rightarrow A$ be an embedding tensor  on a \textsf{MLie-Rep} pair $ (A,V)$,   then $(V,\{\cdot,\cdot\}_\mathcal{K} )$ is a    anti-Leibniz algebra, where
    \begin{eqnarray*}
\{u,v\}_\mathcal{K} &=&\pi(\mathcal{K}(u))(v),\quad \forall u,v\in V.
    \end{eqnarray*}
\end{thm}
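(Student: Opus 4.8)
The plan is to verify directly that the bracket $\{u,v\}_{\mathcal K} = \pi(\mathcal K(u))(v)$ satisfies the left anti-Leibniz identity \eqref{antiLeibIdentity}, using only the embedding tensor condition \eqref{EmTen} and the representation condition \eqref{rep}. First I would expand all three terms of the identity. Writing $L_u := \pi(\mathcal K(u))$ for brevity (not a paper macro — I will use $\pi(\mathcal K(u))$ explicitly in the actual writeup), we have
$\{u,\{v,w\}_{\mathcal K}\}_{\mathcal K} = \pi(\mathcal K(u))\,\pi(\mathcal K(v))(w)$,
while
$\{\{u,v\}_{\mathcal K},w\}_{\mathcal K} = \pi\big(\mathcal K(\pi(\mathcal K(u))v)\big)(w)$,
and
$\{v,\{u,w\}_{\mathcal K}\}_{\mathcal K} = \pi(\mathcal K(v))\,\pi(\mathcal K(u))(w)$.

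The key step is to rewrite the middle term. By the embedding tensor identity \eqref{EmTen}, $\mathcal K(\pi(\mathcal K(u))v) = \mathcal K(u)\circ\mathcal K(v)$, so
$\{\{u,v\}_{\mathcal K},w\}_{\mathcal K} = \pi\big(\mathcal K(u)\circ\mathcal K(v)\big)(w)$.
Now apply the representation condition \eqref{rep} with $x = \mathcal K(u)$, $y = \mathcal K(v)$: this gives
$\pi(\mathcal K(u)\circ\mathcal K(v)) = -\pi(\mathcal K(u))\pi(\mathcal K(v)) - \pi(\mathcal K(v))\pi(\mathcal K(u))$.
Substituting back,
$\{\{u,v\}_{\mathcal K},w\}_{\mathcal K} = -\pi(\mathcal K(u))\pi(\mathcal K(v))(w) - \pi(\mathcal K(v))\pi(\mathcal K(u))(w) = -\{u,\{v,w\}_{\mathcal K}\}_{\mathcal K} - \{v,\{u,w\}_{\mathcal K}\}_{\mathcal K}$,
which is exactly \eqref{antiLeibIdentity} after rearranging. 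Hence $(V,\{\cdot,\cdot\}_{\mathcal K})$ is an anti-Leibniz algebra.

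This argument is essentially a two-line computation once the right substitutions are identified, so there is no real obstacle; the only mild subtlety is making sure the roles of the three arguments are tracked correctly (the anti-Leibniz identity is not symmetric in $u$ and $v$ inside, so one must check that \eqref{rep} is being applied with the operands in the order that produces both cross terms). An alternative, more conceptual route — which I would mention as a remark rather than carry out — is to invoke Theorem \ref{graphMLie}: $\mathsf{Gr}(\mathcal K)$ is an anti-Leibniz subalgebra of $A\oplus_{ALeib}V$, and the projection $\mathcal K u + u \mapsto u$ is a linear isomorphism $\mathsf{Gr}(\mathcal K)\to V$; transporting the bracket of $A\oplus_{ALeib}V$ through this isomorphism yields precisely $\{u,v\}_{\mathcal K} = \pi(\mathcal K u)v$ (the $A$-component $\mathcal K u\circ\mathcal K v$ maps to nothing, and the $V$-component is $\pi(\mathcal K u)v$), so $(V,\{\cdot,\cdot\}_{\mathcal K})$ inherits the anti-Leibniz structure. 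Either way the result follows; I would present the direct computation as the proof since it is self-contained and short.
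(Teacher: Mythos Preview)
Your proposal is correct and follows essentially the same approach as the paper: both proofs expand the three terms of the anti-Leibniz identity, use the embedding tensor condition \eqref{EmTen} to rewrite $\mathcal K(\{u,v\}_{\mathcal K})$ as $\mathcal K(u)\circ\mathcal K(v)$, and then apply the representation identity \eqref{rep} to conclude. The only cosmetic difference is that the paper sums the three terms and shows the total vanishes, whereas you isolate the middle term and show it equals minus the other two; the alternative graph-based argument you mention is a nice remark but is not the route the paper takes.
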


\begin{proof}
    Let $u,v,w\in V$.  Then
    $\mathcal K\{u,v\}_{\mathcal{K}}=\mathcal K(u)\circ\mathcal K(v)$. On the other hand, we have    \begin{eqnarray*}
       & & \{u,\{v,w\}_\mathcal{K} \}_\mathcal{K} +\{\{u,v\}_\mathcal{K} ,w\}_\mathcal{K} +\{v,\{u,w\}_\mathcal{K} \}_\mathcal{K} \\
       & =& \pi(\mathcal K\{u,v\}_{\mathcal{K}})w+\pi(\mathcal K(u))\pi(\mathcal K(v))w+\pi(\mathcal K(v))\pi(\mathcal K(u))w\\
        &=& \pi(\mathcal K(u)\circ\mathcal K(v))w+\pi(\mathcal K(u))\pi(\mathcal K(v))w+\pi(\mathcal K(v))\pi(\mathcal K(u))w\\
        &\overset{\eqref{rep}}{=} &0.
    \end{eqnarray*}
    Then $(V,\{\cdot,\cdot\}_\mathcal{K} )$ is an anti-Leibniz algebra.
\end{proof}
\begin{cor}
    Let $(A,\circ)$ be a mock-Lie algebra and $\mathcal{K}:A\to A$ be an averaging operator. Then, $A$ is also equipped with an anti-Leibniz structure defined by $$\{x,y\}_\mathcal{K}=\mathcal{K}(x)\circ y.$$
\end{cor}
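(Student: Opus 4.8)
The statement to prove is the final corollary: if $(A,\circ)$ is a mock-Lie algebra and $\mathcal{K}:A\to A$ is an averaging operator, then $\{x,y\}_\mathcal{K}=\mathcal{K}(x)\circ y$ defines an anti-Leibniz structure on $A$.

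The plan is to recognize this as an immediate application of Theorem \ref{MLietoALeibniz}. Recall that an averaging operator is precisely an embedding tensor $\mathcal{K}:A\to A$ with respect to the adjoint representation $(A,L)$, where $\pi = L$ and $L_x(y) = x\circ y$. So $(A,A)$ with $\pi=L$ is a \textsf{MLie-Rep} pair, and $\mathcal{K}$ is an embedding tensor on it.

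First I would apply Theorem \ref{MLietoALeibniz} directly: it gives that $(A,\{\cdot,\cdot\}_\mathcal{K})$ is an anti-Leibniz algebra with $\{x,y\}_\mathcal{K} = \pi(\mathcal{K}(x))(y) = L_{\mathcal{K}(x)}(y) = \mathcal{K}(x)\circ y$. That is exactly the claimed bracket, so there is essentially nothing left to do beyond spelling out this identification. I would write a one- or two-line proof: observe $\pi = L$, note $L_{\mathcal{K}(x)}(y) = \mathcal{K}(x)\circ y$, and invoke the theorem.

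There is no real obstacle here — the only thing to be careful about is making the substitution $\pi \mapsto L$ explicit so the reader sees that the abstract bracket $\pi(\mathcal{K}(u))(v)$ specializes to $\mathcal{K}(x)\circ y$. I present the short argument below.

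\begin{proof}
Recall that an averaging operator on $(A,\circ)$ is exactly an embedding tensor on the \textsf{MLie-Rep} pair $(A,L)$, where $L:A\to End(A)$ is the adjoint representation given by $L_x(y)=x\circ y$. Applying Theorem \ref{MLietoALeibniz} with $V=A$ and $\pi=L$, we obtain that $(A,\{\cdot,\cdot\}_\mathcal{K})$ is an anti-Leibniz algebra, where
\begin{eqnarray*}
\{x,y\}_\mathcal{K}=L(\mathcal{K}(x))(y)=\mathcal{K}(x)\circ y,\quad \forall x,y\in A.
\end{eqnarray*}
This is the desired anti-Leibniz structure.
\end{proof}
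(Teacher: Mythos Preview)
Your proof is correct and matches the paper's intent: the corollary is stated without proof precisely because it is the immediate specialization of Theorem \ref{MLietoALeibniz} to the adjoint representation $(A,L)$, which is exactly what you do.
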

\begin{ex} 
According to Example \ref{exmpl}
(2), $(A,\circ)$ is a $4$-dimensional mock-Lie algebra. We define an averaging operator $\mathcal{K}$ on $(A,\circ)$ which acts on the basis $\mathcal{B}=\{e_1, e_2, e_3, e_4\}$  as follow: 
$$\mathcal{K}(e_1)=\mathcal{K}(e_3)=e_1, \qquad \mathcal{K}(e_2)=\mathcal{K}(e_4)=e_2.$$
Then, by applying the previous corollary, $(A,\{\cdot,\cdot\})$ becomes an anti-Leibniz algebra,  with the bracket defined by     
\begin{align*}
    &\{e_1,e_1\}= e_2,\qquad \{e_1,e_3\}= e_4,\\
    &\{e_3,e_1\}= e_2,\qquad \{e_3,e_3\}= e_4.
\end{align*}
\end{ex}

Let $\mathcal K:V\longrightarrow A$ be an  embedding tensor on a \textsf{MLie-Rep} pair $ (A,V)$.
According to Theorem \ref{MLietoALeibniz}, the bracket  $\{u,v\}_\mathcal{K}:=\pi(\mathcal Ku)v$ carries an anti-Leibniz algebra structure on $V$. Define $\pi^L:V\longrightarrow End(A)$ and $\pi^R:V\longrightarrow End(A)$ by
\begin{equation}
\pi^L(u)y=\mathcal Ku\circ y,\quad \pi^R(v)x=x\circ \mathcal Kv+\mathcal K(\pi(x)v)\quad\forall x,y\in V.
\end{equation}
Then we have the following observation. 
\begin{pro}
The triple  $(A;\pi^L,\pi^R)$ is a representation of the anti-Leibniz algebra $(V,\{\cdot,\cdot\}_\mathcal{K})$.
\end{pro}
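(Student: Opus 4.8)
The plan is to verify directly that the triple $(A;\pi^L,\pi^R)$ satisfies the three defining axioms \eqref{rep-1}, \eqref{rep-2}, \eqref{rep-3} of a representation of the anti-Leibniz algebra $(V,\{\cdot,\cdot\}_\mathcal{K})$. Throughout I would use the two facts available: that $\pi$ is a representation of the mock-Lie algebra $(A,\circ)$, so \eqref{rep} holds, and that $\mathcal K$ is an embedding tensor, so $\mathcal K(u)\circ\mathcal K(v)=\mathcal K(\pi(\mathcal K u)v)=\mathcal K(\{u,v\}_\mathcal{K})$ for all $u,v\in V$. I would also record the consequence that $\pi^L(u)=L_{\mathcal K u}\circ$ is, up to composing with $\mathcal K$, just the adjoint action pushed forward, which will make the first identity almost immediate.

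First I would check \eqref{rep-1}, i.e. $\pi^L(\{u,v\}_\mathcal{K})=-\pi^L(u)\pi^L(v)-\pi^L(v)\pi^L(u)$. Applying the left side to $y\in A$ gives $\mathcal K(\{u,v\}_\mathcal{K})\circ y=(\mathcal K u\circ\mathcal K v)\circ y$ by the embedding tensor identity; the right side applied to $y$ gives $-\mathcal K u\circ(\mathcal K v\circ y)-\mathcal K v\circ(\mathcal K u\circ y)$, and these agree precisely by the mock-Lie identity \eqref{antider} with $x=\mathcal K u$, $y=\mathcal K v$, $z=y$. Next I would tackle \eqref{rep-3}, $\pi^R(v)\pi^L(u)=\pi^R(v)\pi^R(u)$: expanding both sides on $x\in A$ and using the definition of $\pi^R$ reduces the claim to an identity involving $\mathcal K u\circ y$ versus $x\circ\mathcal K v+\mathcal K(\pi(x)v)$ composed appropriately; here I expect to need both \eqref{rep} and the embedding tensor relation, together with the fact (from Theorem \ref{MLietoALeibniz}) that $\mathcal K$ intertwines $\{\cdot,\cdot\}_\mathcal K$ with $\circ$. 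Finally \eqref{rep-2}, $\pi^R(\{u,v\}_\mathcal{K})=-\pi^L(u)\pi^R(v)-\pi^R(v)\pi^L(u)$, is the longest computation: expanding $\pi^R(\{u,v\}_\mathcal K)x=x\circ\mathcal K(\pi(\mathcal K u)v)+\mathcal K(\pi(x)\pi(\mathcal K u)v)$ and comparing with the expansion of the right side, one collects terms of the form $\mathcal K u\circ(x\circ\mathcal K v)$, $\mathcal K u\circ\mathcal K(\pi(x)v)$, $(x\circ\mathcal K v)\circ\mathcal K u$, etc., and repeatedly applies \eqref{rep}, \eqref{antider}, and $\mathcal K u\circ\mathcal K v=\mathcal K(\pi(\mathcal K u)v)$.

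The main obstacle is the bookkeeping in \eqref{rep-2}: the operator $\pi^R$ is a genuine sum of two structurally different pieces (a right multiplication in $A$ and a $\mathcal K$-twisted term), so each product $\pi^L(u)\pi^R(v)$ and $\pi^R(v)\pi^L(u)$ expands into several terms, and one must see that everything cancels after substituting the mock-Lie Jacobi identity in the right places. A useful organizing principle is to split $\pi^R(v)=\rho_1(v)+\rho_2(v)$ with $\rho_1(v)x=x\circ\mathcal K v$ and $\rho_2(v)x=\mathcal K(\pi(x)v)$, verify the needed partial identities for each piece, and recombine; alternatively, one can deduce the representation property conceptually by recognizing $(A;\pi^L,\pi^R)$ as arising from the action of the descendent anti-Leibniz structure on $A$ compatible with $\mathcal K$ being a morphism, but the direct computation is safest and is what I would write out.
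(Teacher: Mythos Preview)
Your plan—verify the three axioms \eqref{rep-1}, \eqref{rep-2}, \eqref{rep-3} for $(A;\pi^L,\pi^R)$ by direct expansion, using only the mock-Lie identity \eqref{antider}, the representation identity \eqref{rep}, and the embedding-tensor relation $\mathcal K u\circ\mathcal K v=\mathcal K(\{u,v\}_{\mathcal K})$—is exactly what the paper intends: its proof is recorded simply as ``Straightforward.'' Your treatment of \eqref{rep-1} is already complete and correct, and the organization you suggest for \eqref{rep-2} (splitting $\pi^R=\rho_1+\rho_2$) is a sensible way to keep the bookkeeping under control; just be careful with \eqref{rep-3}, where the cancellation hinges precisely on the sign convention in the definition of $\pi^R$.
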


\begin{proof}
Straightforward. 
\end{proof}

   Let $(A,\star)$ be an anti-associative algebra and $(V,\varrho,\mu)$ be a representation. 
   A linear map $\mathcal K: V \to A$ is called an embedding tensor  if it satisfies 
      \begin{align}\label{EmTenantiass}
  & \mathcal K(u)\star \mathcal K(v)=\mathcal K(\varrho(\mathcal K(u))v)=\mathcal K(\mu(\mathcal K(v))u),\ \forall u,v \in V.
      \end{align} 
      The embedding tensor of an anti-associative algebra $A$ with respect to the adjoint representation $(A,l,r)$ is called an averaging operator. In this case the identity \eqref{EmTenantiass} can be written as
\begin{align*}
  & \mathcal K(x)\star \mathcal K(y)=\mathcal K(\mathcal K(x)\star y)=\mathcal K(x\star\mathcal K(y)),\, \forall\, x,y \in A.
      \end{align*}
\begin{pro}\label{embtensAAandMC}
    Let $(V,\varrho,\mu)$ be a representation of an anti-associative algebra $(A,\star)$. Then $(V,\pi=\varrho+\mu)$ is a representation of the induced mock-Lie algebra $(A,\circ)$ given in \eqref{anticom}.
Moreover, if $\mathcal{K}:V\to A$ is an embedding tensor on the anti-associative algebra $(A,\star)$ with respect to the representation $(V,\varrho,\mu)$. Then $\mathcal{K}$ is an embedding tensor on the mock-Lie algebra $(A,\circ)$ with respect to the representation $(V,\pi)$.
\end{pro}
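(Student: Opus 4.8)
The plan is to verify the two assertions in turn, both by direct substitution into the defining identities. For the first claim, I would set $\pi = \varrho + \mu$ and check the representation axiom \eqref{rep} for mock-Lie algebras, namely $\pi(x\circ y) = -\pi(x)\pi(y) - \pi(y)\pi(x)$, where $x\circ y = x\star y + y\star x$ by \eqref{anticom}. Expanding the left-hand side gives $\varrho(x\star y) + \varrho(y\star x) + \mu(x\star y) + \mu(y\star x)$, and using the three relations in \eqref{repantiass} one rewrites each term: $\varrho(x\star y) = -\varrho(x)\varrho(y)$, $\varrho(y\star x) = -\varrho(y)\varrho(x)$, $\mu(x\star y) = -\mu(y)\mu(x)$, $\mu(y\star x) = -\mu(x)\mu(y)$. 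On the right-hand side, $-\pi(x)\pi(y) - \pi(y)\pi(x) = -(\varrho(x)+\mu(x))(\varrho(y)+\mu(y)) - (\varrho(y)+\mu(y))(\varrho(x)+\mu(x))$, which expands into eight terms; the cross terms $\mu(x)\varrho(y)$ and $\varrho(y)\mu(x)$ (and their $x\leftrightarrow y$ analogues) cancel in pairs precisely by the third relation $\mu(x)\varrho(y) = -\varrho(y)\mu(x)$, leaving exactly the four terms produced from the left-hand side. So the identity holds.

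For the second claim, suppose $\mathcal{K}\colon V\to A$ is an embedding tensor for $(A,\star)$ with respect to $(V,\varrho,\mu)$, i.e.\ \eqref{EmTenantiass} holds: $\mathcal{K}(u)\star\mathcal{K}(v) = \mathcal{K}(\varrho(\mathcal{K}(u))v) = \mathcal{K}(\mu(\mathcal{K}(v))u)$. I must show $\mathcal{K}(u)\circ\mathcal{K}(v) = \mathcal{K}(\pi(\mathcal{K}(u))v)$, which is \eqref{EmTen} for the mock-Lie algebra $(A,\circ)$ and the representation $(V,\pi)$ from the first part. Now $\mathcal{K}(u)\circ\mathcal{K}(v) = \mathcal{K}(u)\star\mathcal{K}(v) + \mathcal{K}(v)\star\mathcal{K}(u)$. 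For the first summand use the equality $\mathcal{K}(u)\star\mathcal{K}(v) = \mathcal{K}(\varrho(\mathcal{K}(u))v)$; for the second, apply \eqref{EmTenantiass} with the roles of $u$ and $v$ swapped in the form $\mathcal{K}(v)\star\mathcal{K}(u) = \mathcal{K}(\mu(\mathcal{K}(u))v)$. Adding, $\mathcal{K}(u)\circ\mathcal{K}(v) = \mathcal{K}(\varrho(\mathcal{K}(u))v) + \mathcal{K}(\mu(\mathcal{K}(u))v) = \mathcal{K}((\varrho+\mu)(\mathcal{K}(u))v) = \mathcal{K}(\pi(\mathcal{K}(u))v)$, as required.

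Neither step presents a genuine obstacle; the only point demanding care is bookkeeping — in the first part, making sure the cross terms are matched correctly under the third relation of \eqref{repantiass}, and in the second part, correctly reading off the two alternative expressions for $\mathcal{K}(u)\star\mathcal{K}(v)$ from \eqref{EmTenantiass} (one with $\varrho$, one with $\mu$ after swapping arguments). Everything else is routine linearity. I would present the first part as a short displayed computation of $\pi(x\circ y) + \pi(x)\pi(y) + \pi(y)\pi(x)$ collapsing to zero, and the second part as the three-line chain of equalities above.
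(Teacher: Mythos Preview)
Your proposal is correct and follows essentially the same approach as the paper: the paper verifies the first claim by expanding $\pi(x\circ y)+\pi(x)\pi(y)+\pi(y)\pi(x)$ into twelve terms that cancel via the three identities in \eqref{repantiass}, and establishes the second claim by the same three-line chain you describe, using both forms of \eqref{EmTenantiass} after swapping $u$ and $v$.
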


\begin{proof}
    For any $x,y\in A$ and $u\in V$, by \eqref{repantiass} we have
    \begin{align*}
        &\pi(x\circ y)u+\pi(x)(\pi(y)u)+\pi(y)(\pi(x)u)\\=&(\varrho+\mu)(x\star y+y\star x)u+(\varrho+\mu)(x)((\varrho+\mu)(y)u)+(\varrho+\mu)(y)((\varrho+\mu)(x)u)\\=&\varrho(x\star y)u+\varrho(y\star x)u+\mu(x\star y)u+\mu(y\star x)u+\varrho(x)\varrho(y)u+\varrho(x)\mu(y)u+\mu(x)\varrho(y)u+\mu(x)\mu(y)u\\&+\varrho(y)\varrho(x)u+\varrho(y)\mu(x)u+\mu(y)\varrho(x)u+\mu(y)\mu(x)u\\=&0.
    \end{align*}
    Thus $(V,\pi=\varrho+\mu)$ is a representation of $(A,\circ)$.
On the other hand, if $\mathcal{K}$ is an embedding tensor on the anti-associative algebra $(A,\star)$, then for any $u,v\in V$, we have
    \begin{align*}
        \mathcal{K}(u)\circ\mathcal{K}(v)&=\mathcal{K}(u)\star\mathcal{K}(v)+\mathcal{K}(v)\star\mathcal{K}(u)\\&=\mathcal{K}(\varrho(\mathcal{K}(u))v)+\mathcal{K}(\mu(\mathcal{K}(u))v)\\&=\mathcal{K}(\pi(\mathcal{K}(u))v).
    \end{align*}
    Hence, $\mathcal{K}$ is also an embedding tensor on the induced mock-Lie algebra $(A,\circ)$ with respect to $(V,\pi)$.
\end{proof}
\begin{cor}
    Any averaging operator $\mathcal{K}:A\to A$ on an anti-associative algebra $(A,\star)$ is an averaging operator on the induced mock-Lie algebra $(A,\circ)$.
\end{cor}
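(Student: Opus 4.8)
The plan is to obtain this as a direct specialization of Proposition~\ref{embtensAAandMC} to the adjoint representations, the only real content being the observation that the representation $\pi=\varrho+\mu$ produced by that proposition, when applied to the adjoint representation of $(A,\star)$, is nothing but the adjoint representation of the induced mock-Lie algebra $(A,\circ)$.

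First I would recall that, by definition, an averaging operator $\mathcal{K}:A\to A$ on the anti-associative algebra $(A,\star)$ is precisely an embedding tensor on $(A,\star)$ with respect to its adjoint representation $(A,l,r)$, where $l(x)(y)=r(y)(x)=x\star y$. Proposition~\ref{embtensAAandMC}, applied with $V=A$, $\varrho=l$, $\mu=r$, then yields at once: (i) the triple $(A,\pi:=l+r)$ is a representation of the induced mock-Lie algebra $(A,\circ)$; and (ii) $\mathcal{K}$ is an embedding tensor on $(A,\circ)$ with respect to $(A,\pi)$.

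It remains to identify $(A,\pi)$ with the adjoint representation $(A,L)$ of $(A,\circ)$. For all $x,y\in A$ one computes
\[
\pi(x)(y)=l(x)(y)+r(x)(y)=x\star y+y\star x=x\circ y=L_x(y),
\]
using the definition \eqref{anticom} of the anti-commutator and $L_x(y)=x\circ y$. Hence $\pi=L$, so statement (ii) says exactly that $\mathcal{K}$ is an embedding tensor on $(A,\circ)$ with respect to $(A,L)$, which by definition means that $\mathcal{K}$ is an averaging operator on the mock-Lie algebra $(A,\circ)$. There is no genuine obstacle here; the one point requiring care is verifying that the $\varrho+\mu$ recipe of Proposition~\ref{embtensAAandMC} sends the adjoint representation of $(A,\star)$ to the adjoint representation of $(A,\circ)$, and that is the one-line identity displayed above.
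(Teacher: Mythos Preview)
Your proposal is correct and matches the paper's intent: the corollary is stated without proof precisely because it is the specialization of Proposition~\ref{embtensAAandMC} to the adjoint representation $(A,l,r)$, together with the one-line identification $\pi=l+r=L$ that you make explicit.
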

\begin{pro}\label{antiasstoantidiass}
    Let $\mathcal{K}:V\rightarrow A$ be an embedding tensor on an anti-associative algebra $(A,\star)$ with respect to the representation $(V,\varrho,\mu)$. Then, $V$ carries an anti-associative dialgebra structure given by 
    \begin{align*}
        u\triangleright_{\mathcal{K}} v=\varrho(\mathcal{K}u)v,\quad u\triangleleft_{\mathcal{K}} v=\mu(\mathcal{K}v)u,\, \forall \ u,v\in V. 
    \end{align*}
\end{pro}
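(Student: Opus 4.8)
The plan is to verify directly the three defining identities of an anti-associative dialgebra (Definition \ref{antiass dialg}) for the two operations $u\triangleright_{\mathcal K}v=\varrho(\mathcal K u)v$ and $u\triangleleft_{\mathcal K}v=\mu(\mathcal K v)u$, using only the representation axioms \eqref{repantiass} and the embedding-tensor identity \eqref{EmTenantiass}. The key mechanism is that $\mathcal K$ converts a "$\varrho$ applied at $\mathcal K u$" or "$\mu$ applied at $\mathcal K v$" into the product $\mathcal K u\star\mathcal K v$ inside $A$; so every time a nested application of $\triangleright_{\mathcal K}$ or $\triangleleft_{\mathcal K}$ produces a term of the form $\varrho(\mathcal K(\,\cdot\,))$ or $\mu(\mathcal K(\,\cdot\,))$, we may replace the inner $\mathcal K$-expression by a $\star$-product and then invoke the anti-associativity relations for $\varrho$ and $\mu$.

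Concretely, first I would compute $(u,v,w)^a_{\triangleright_{\mathcal K}}=(u\triangleright_{\mathcal K}v)\triangleright_{\mathcal K}w+u\triangleright_{\mathcal K}(v\triangleright_{\mathcal K}w)$. The first summand is $\varrho(\mathcal K(\varrho(\mathcal K u)v))w$; by \eqref{EmTenantiass}, $\mathcal K(\varrho(\mathcal K u)v)=\mathcal K u\star\mathcal K v$, so this equals $\varrho(\mathcal K u\star\mathcal K v)w$. The second summand is $\varrho(\mathcal K u)\varrho(\mathcal K v)w$. Their sum is $\varrho(\mathcal K u\star\mathcal K v)w+\varrho(\mathcal K u)\varrho(\mathcal K v)w=0$ by the first relation in \eqref{repantiass}. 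Symmetrically, for $(u,v,w)^a_{\triangleleft_{\mathcal K}}$, the term $(u\triangleleft_{\mathcal K}v)\triangleleft_{\mathcal K}w=\mu(\mathcal K w)\mu(\mathcal K v)u$ and $u\triangleleft_{\mathcal K}(v\triangleleft_{\mathcal K}w)=\mu(\mathcal K(\mu(\mathcal K w)v))u=\mu(\mathcal K v\star\mathcal K w)u$, again using \eqref{EmTenantiass} in the form $\mathcal K(\mu(\mathcal K w)v)=\mathcal K v\star\mathcal K w$; the sum vanishes by the second relation $\mu(y)\mu(x)=-\mu(x\star y)$ with $x=\mathcal K v$, $y=\mathcal K w$.

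For the mixed anti-associator $(u,v,w)^a_{\times}=(u\triangleright_{\mathcal K}v)\triangleleft_{\mathcal K}w+u\triangleright_{\mathcal K}(v\triangleleft_{\mathcal K}w)$: the first term is $\mu(\mathcal K w)\bigl(\varrho(\mathcal K u)v\bigr)$ and the second is $\varrho(\mathcal K u)\bigl(\mu(\mathcal K w)v\bigr)$, so their sum is $\mu(\mathcal K w)\varrho(\mathcal K u)v+\varrho(\mathcal K u)\mu(\mathcal K w)v$, which is zero by the third relation $\mu(x)\varrho(y)=-\varrho(y)\mu(x)$. I would then conclude that $(V,\triangleright_{\mathcal K},\triangleleft_{\mathcal K})$ satisfies \eqref{antiass dialg1}--\eqref{antiass dialg3} and is therefore an anti-associative dialgebra.

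The computations are entirely routine; the only point demanding care is bookkeeping—making sure in each nested term that the inner $\mathcal K$-expression is exactly of the shape $\varrho(\mathcal K u)v$ or $\mu(\mathcal K v)u$ so that the correct half of \eqref{EmTenantiass} applies, and tracking the order of arguments in $\triangleleft_{\mathcal K}$ (which reverses them) so that the $\star$-products land in the slots matching the signs in \eqref{repantiass}. There is no genuine obstacle: the statement is a direct transport of the anti-associativity relations through the embedding tensor, exactly parallel to how Proposition \ref{antiasstoantidiass}'s mock-Lie analogue (Theorem \ref{MLietoALeibniz}) was proved.
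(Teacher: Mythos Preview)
Your proof is correct and follows essentially the same approach as the paper: both verify the three anti-associator identities directly by expanding the nested operations, replacing the inner $\mathcal K$-expression with a $\star$-product via \eqref{EmTenantiass}, and then cancelling against the corresponding representation axiom in \eqref{repantiass}. You are in fact slightly more thorough, since you write out the mixed identity \eqref{antiass dialg3} explicitly, whereas the paper dispatches it with ``similarly, one can check''.
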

\begin{proof}
    Using Eqs \eqref{repantiass} and \eqref{EmTenantiass}, for any $u,v,w\in V$, we have 
    \begin{align*}
        (u\triangleright_\mathcal{K}v)\triangleright_\mathcal{K} w+u \triangleright_\mathcal{K}(v \triangleright_\mathcal{K} w)&=\varrho(\mathcal{K}(\varrho(\mathcal{K}u)v))w+\varrho(\mathcal{K}u)(\varrho(\mathcal{K}v)w)\\&=\varrho(\mathcal{K}(u)\star \mathcal{K}(v))w-\varrho(\mathcal{K}(u)\star \mathcal{K}(v))w=0,
    \end{align*}
    on the other hand we have
    \begin{align*}
     (u\triangleleft_\mathcal{K}v)\triangleleft_\mathcal{K} w+u \triangleleft_\mathcal{K}(v \triangleleft_\mathcal{K} w) &=-\mu(\mathcal{K}(v)\star \mathcal{K}(w))u+\mu(\mathcal{K}(v)\star \mathcal{K}(w))u=0.  
    \end{align*}
    Similarly, one can check the Eq \eqref{antiass dialg3}. Thus, $(V,\triangleright_\mathcal{K},\triangleleft_\mathcal{K})$ is an anti-associative dialgebra.
\end{proof}
\section{Anti-Leibniz trialgebras}\label{Sec5}
In this section,  we make a triplication of an anti-Leibniz algebra obtaining what we will  call anti-Leibniz trialgebra. 
To do this, we need a homomorphic embedding tensor on
a mock-Lie algebra which allows the replicating of the multiplication into three parts with some compatibilities.

\begin{defi}  
   Let $(A,\circ_A)$ and $(B,\circ_B)$ be two mock-Lie algebras. We  say that $A$ acts on $B$ if there is a linear map $\pi:A\to End(B)$ such that $(B,\pi)$ is a representation of $A$ and for any $x\in A,\  a,b\in B$, we have
    \begin{equation}\label{actLie}
        \pi(x)(a\circ_Bb)+
        a\circ_B\pi(x)b+b\circ_B\pi(x)a=0.
    \end{equation}
    \end{defi}
    
    A \textsf{MLie-Act} pair is a mock-Lie algebra $(A,\circ_A)$ with an action $(B,\pi,\circ _B)$
and we refer to such a  pair by $(A,B)$.
\begin{pro}
   The  tuple $(B,\pi,\circ_B)$ is an action  of  a mock-Lie  algebra $(A,\circ_A)$
if and only if  $A \oplus B$  carries a mock-Lie structure with multiplication given by
\begin{align*}
&(x + a)\circ_{A \oplus B}  (y + b )=  x \circ_A  y  + \pi(x)b +\pi(y)a +a\circ_B   b 
\end{align*}
 for all  $x,y \in A$ and $ a,b \in B,$ which  is called the semi-direct product of $A$ with $B$.
\end{pro}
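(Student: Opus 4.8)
The statement to prove is the semi-direct product characterization: $(B,\pi,\circ_B)$ is an action of the mock-Lie algebra $(A,\circ_A)$ if and only if the bracket $\circ_{A\oplus B}$ defined on $A\oplus B$ is a mock-Lie algebra structure. The plan is to verify the two defining conditions of a mock-Lie algebra for $(A\oplus B,\circ_{A\oplus B})$ — commutativity and the Jacobi identity \eqref{Jacoby id} — and to read off precisely which of the hypotheses is responsible for each piece.

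First I would check commutativity of $\circ_{A\oplus B}$: this is immediate from the commutativity of $\circ_A$, of $\circ_B$, and the symmetry of the term $\pi(x)b+\pi(y)a$ in the arguments $x+a$ and $y+b$; no hypothesis on $\pi$ beyond linearity is needed here, so this direction is automatic in both implications. The substance is the Jacobi identity. I would expand
\[
(x+a)\circ_{A\oplus B}\big((y+b)\circ_{A\oplus B}(z+c)\big)+\text{(cyclic)}
\]
and sort the resulting terms by where they land. The terms lying in $A$ collapse, by bilinearity, to the Jacobi identity for $\circ_A$ applied to $x,y,z$, which holds since $(A,\circ_A)$ is mock-Lie. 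The terms in $B$ split into three natural groups: (i) the purely $B$-valued terms $a\circ_B(b\circ_B c)+\text{cyclic}$, which vanish by the Jacobi identity for $\circ_B$; (ii) the terms that are quadratic in $\pi$, of the shape $\pi(x)\pi(y)c+\pi(y)\pi(x)c+\pi(x\circ_A y)c$ and its cyclic images, which vanish because $(B,\pi)$ is a representation of $A$, i.e.\ by \eqref{rep}; and (iii) the mixed terms that are linear in $\pi$ and linear in $\circ_B$, which, after collecting, take exactly the form $\pi(x)(b\circ_B c)+b\circ_B\pi(x)c+c\circ_B\pi(x)b$ (plus cyclic permutations in $x,y,z$ and the corresponding elements of $B$), and these vanish precisely by the compatibility condition \eqref{actLie}. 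Conversely, assuming $(A\oplus B,\circ_{A\oplus B})$ is a mock-Lie algebra, restricting the Jacobi identity to elements of $A$ recovers that $A$ is mock-Lie; restricting to $a,b,c\in B$ gives the Jacobi identity for $\circ_B$; taking $x\in A$, $b,c\in B$ (and the rest zero) isolates exactly \eqref{actLie}; and taking $x,y\in A$, $c\in B$ isolates \eqref{rep}, so $(B,\pi)$ is a representation. Hence all the axioms of an action hold.

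The main obstacle — really the only non-mechanical point — is the careful bookkeeping in step (iii): after fully expanding the cyclic sum there are six mixed terms (two from each of the three cyclic summands), and one must group them correctly so that each group of three matches an instance of \eqref{actLie} with the roles of $x$ and the $B$-elements chosen consistently. This is a routine but slightly delicate rearrangement; once it is organized correctly, every term is accounted for by exactly one of the three hypotheses, which also makes the converse direction transparent. I would therefore present the computation by displaying the expansion grouped into the blocks "$A$-part", "$\pi$-quadratic part", "$B$-part", and "mixed part", and invoke \eqref{Jacoby id} for $A$, \eqref{rep}, \eqref{Jacoby id} for $B$, and \eqref{actLie} respectively.
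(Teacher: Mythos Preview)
Your proposal is correct and follows essentially the same approach as the paper: expand the Jacobi sum for $\circ_{A\oplus B}$, cancel the pure $A$-terms and pure $B$-terms by the Jacobi identities of $\circ_A$ and $\circ_B$, and observe that the remaining $B$-valued terms vanish precisely when $(B,\pi)$ is a representation (the $\pi$-quadratic block) and the compatibility \eqref{actLie} holds (the mixed block). Your write-up is in fact slightly more detailed than the paper's, since you separate commutativity explicitly and spell out which specializations of elements recover each axiom in the converse direction.
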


\begin{proof}
Let $(A,\circ_A)$ and $(B,\circ_B)$ be two mock-Lie algebras. Then for all $x,y,z\in A$ and $a,b,c\in B$, we have
\begin{align*}
    &(x+a)\circ_{A\oplus B} ((y+b)\circ_{A\oplus B}(z+c))+(y+b)\circ_{A\oplus B}((z+c))\circ_{A\oplus B}(x+a))\\&+(z+c)\circ_{A\oplus B}((x+a)\circ_{A\oplus B}(y+b))\\=&(x+a)\circ_{A\oplus B}(y\circ_A z+\pi(y)c+\pi(z)b+b\circ_B c)\\&+(y+b)\circ_{A\oplus B}(z\circ_A x+\pi(z)a+\pi(x)c+c\circ_B a)\\&+(z+c)\circ_{A\oplus B}(x\circ_A y+\pi(x)b+\pi(y)a+a\circ_B b)\\=&x\circ_A (y\circ_A z)+\pi(x)(\pi(y)c+\pi(z)b+b\circ_B c)+\pi(y\circ_A z)a+a\circ_B (\pi(y)c+\pi(z)b+b\circ_B c)\\&+y\circ_A (z\circ_A x)+\pi(y)(\pi(z)a+\pi(x)c+c\circ_B a)+\pi(z\circ_A x)b+b\circ_B(\pi(z)a+\pi(x)c+c\circ_B a)\\&+z\circ_A (x\circ_A y)+\pi(z)(\pi(x)b+\pi(y)a+a\circ_B b)+\pi(x\circ_A y)c +c\circ_B (\pi(x)b+\pi(y)a+a\circ_B b)\\=&\pi(x)(\pi(y)c+\pi(z)b+b\circ_B c)+\pi(y\circ_A z)a+a\circ_B (\pi(y)c+\pi(z)b)\\&+\pi(y)(\pi(z)a+\pi(x)c+c\circ_B a)+\pi(z\circ_A x)b+b\circ_B(\pi(z)a+\pi(x)c)\\&+\pi(z)(\pi(x)b+\pi(y)a+a\circ_B b)+\pi(x\circ_A y)c +c\circ_B (\pi(x)b+\pi(y)a).
\end{align*}
Then, $(A\oplus B,\circ_{A\oplus B})$ is a mock-Lie algebra if and only if $(B,\pi)$ is a representation of $(A,\circ_A)$ and $$\pi(x)(a\circ_Bb)+
        a\circ_B\pi(x)b+b\circ_B\pi(x)a=0,\,\forall\, x\in A,\, a,b\in B.$$
   That is $(A\oplus B,\circ_{A\oplus B})$ is a mock-Lie algebra if and only if  $(B,\pi,\circ_B)$ is an action of $(A,\circ_A)$. 
\end{proof}

In the following, we introduce the notion of homomorphic embedding tensor on mock-Lie algebra with a given action.
\begin{defi}
       A linear map $\mathcal H: B \to A$ is called a  homomorphic embedding tensor on the \textsf{MLie-Act} pair $(A,B)$ if it's an embedding tensor and a morphism.    
\end{defi}
A homomorphic embedding tensor on a mock Lie algebra $A$ with respect to the adjoint representation $(A,L)$ is called a homomorphic averaging operator.
\begin{ex}
    A crossed module of mock-Lie algebra is a quadruple $(A, B, \partial,\pi)$ in which $(A,\circ_A)$ is a mock-Lie algebra, $(B,\pi,\circ_B)$ is an action and $\partial:B\to A$ is a morphism of mock-Lie algebra, where
\begin{align*}
\partial (\pi(x)a) = x \circ_A \partial(a),\quad \pi(\partial(a))b = a \circ_B b,
\end{align*} for all $  x \in A, a, b \in B.$
Then, $\partial$ is a homomorphic  embedding tensor of the mock-Lie algebra $(A,\circ_A)$.
\end{ex}

In \cite{pei-rep}, the authors give the notion of Leibniz trialgebra. In the following,  we will introduce  anti-Leibniz trialgebra.

\begin{defi}\label{anti-Leib trialg} 
A (left) anti-Leibniz trialgebra is a triple $(A,\{\cdot,\cdot\},\circ )$ consisting of a mock-Lie algebra $(A,\circ )$ and an anti-Leibniz algebra $(A,\{\cdot,\cdot\})$ such that
    \begin{align}
& \{x,y\circ z\}+y\circ\{x,z\}+z\circ\{x,y\}=0 ,\label{eq:trileib1}\\
& \{x\circ y,z\}=\{\{x,y\},z\}, \label{eq:trileib2}
\end{align}
 for any $x,y, z \in A$, 
\end{defi}
\begin{rmk}\begin{enumerate}
    \item A right anti-Leibniz trialgebra is a triple $(A,\{\cdot,\cdot\},\circ )$ consisting of a mock-Lie algebra $(A,\circ )$ and a right anti-Leibniz algebra $(A,\{\cdot,\cdot\})$ such that, for any $x,y, z \in A$
    \begin{eqnarray}
& \{y\circ z,x\}+y\circ\{z,x\}+z\circ\{y,x\}=0 ,\label{eq:Rtrileib1}\\
& \{z, x\circ y\}=\{z,\{x,y\}\}.& \label{eq:Rtrileib2}
\end{eqnarray}

\item
    If we put $\{x,y\}^\prime=\{y,x\}$, then $(A,\{\c,\c\}^\prime,\circ )$ is a right anti-Leibniz trialgebra.
    \end{enumerate}
\end{rmk}
\begin{thm}\label{MLietoAntiLeibTri}
    Let $\mathcal{H}:B\to A$ be a homomorphic embedding tensor on the \textsf{MLie-Act} pair $(A,B)$. Then $(B,\{\c,\c\}_\mathcal{H},\circ _B)$ is a anti-Leibniz trialgebra, where
    $$\{a, b\}_\mathcal{H}=\pi(\mathcal{H}(a))b,\quad \forall\ a,b\in B.$$
\end{thm}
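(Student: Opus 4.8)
The plan is to verify directly the three defining conditions of an anti-Leibniz trialgebra for the triple $(B,\{\cdot,\cdot\}_{\mathcal H},\circ_B)$: first that $(B,\{\cdot,\cdot\}_{\mathcal H})$ is an anti-Leibniz algebra, second the mixed identity \eqref{eq:trileib1}, and third the identity \eqref{eq:trileib2}. The first point is for free: $\mathcal H$ is in particular an embedding tensor on the \textsf{MLie-Rep} pair $(A,B)$, so Theorem \ref{MLietoALeibniz} already gives that $\{a,b\}_{\mathcal H}=\pi(\mathcal H(a))b$ defines an anti-Leibniz structure on $B$. The remaining work is to exploit the extra hypotheses that $\mathcal H$ is simultaneously a morphism of mock-Lie algebras (so $\mathcal H(a\circ_B b)=\mathcal H(a)\circ_A\mathcal H(b)$) and that $(B,\pi,\circ_B)$ is an \emph{action}, i.e.\ satisfies \eqref{actLie}.

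For \eqref{eq:trileib1}, I would expand
$$
\{a,b\circ_B c\}_{\mathcal H}+b\circ_B\{a,c\}_{\mathcal H}+c\circ_B\{a,b\}_{\mathcal H}
=\pi(\mathcal H(a))(b\circ_B c)+b\circ_B\bigl(\pi(\mathcal H(a))c\bigr)+c\circ_B\bigl(\pi(\mathcal H(a))b\bigr),
$$
which is exactly the left-hand side of the action compatibility \eqref{actLie} with $x=\mathcal H(a)\in A$, hence vanishes. This step uses only that $\pi$ defines an action, not the morphism property.

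For \eqref{eq:trileib2} I expect the main work. Starting from
$$
\{a\circ_B b,c\}_{\mathcal H}=\pi\bigl(\mathcal H(a\circ_B b)\bigr)c=\pi\bigl(\mathcal H(a)\circ_A\mathcal H(b)\bigr)c,
$$
where the last equality is the morphism property, I would then apply the representation identity \eqref{rep} for $(B,\pi)$ to rewrite $\pi(\mathcal H(a)\circ_A\mathcal H(b))c=-\pi(\mathcal H(a))\pi(\mathcal H(b))c-\pi(\mathcal H(b))\pi(\mathcal H(a))c$. On the other side,
$$
\{\{a,b\}_{\mathcal H},c\}_{\mathcal H}=\pi\bigl(\mathcal H(\pi(\mathcal H(a))b)\bigr)c,
$$
and here I would use the embedding tensor identity \eqref{EmTen}, namely $\mathcal H(\pi(\mathcal H(a))b)=\mathcal H(a)\circ_A\mathcal H(b)$, to collapse this again to $\pi(\mathcal H(a)\circ_A\mathcal H(b))c$. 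Thus both sides equal $\pi(\mathcal H(a)\circ_A\mathcal H(b))c$ and \eqref{eq:trileib2} holds. The potential subtlety — and the step I would double-check — is that \eqref{eq:trileib2} as written does not even need \eqref{rep}: both sides reduce to $\pi(\mathcal H(a)\circ_A\mathcal H(b))c$ purely from the morphism property on the left side and the embedding-tensor property on the right side, so the argument is quite short once the two structural hypotheses on $\mathcal H$ are correctly invoked. Assembling the three verifications then yields that $(B,\{\cdot,\cdot\}_{\mathcal H},\circ_B)$ is an anti-Leibniz trialgebra. $\hfill\square$
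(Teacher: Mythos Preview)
Your proposal is correct and follows essentially the same route as the paper: invoke Theorem~\ref{MLietoALeibniz} for the anti-Leibniz structure, use the action compatibility \eqref{actLie} with $x=\mathcal H(a)$ for \eqref{eq:trileib1}, and combine the morphism property $\mathcal H(a\circ_B b)=\mathcal H(a)\circ_A\mathcal H(b)$ with the embedding-tensor identity $\mathcal H(\pi(\mathcal H(a))b)=\mathcal H(a)\circ_A\mathcal H(b)$ for \eqref{eq:trileib2}. Your observation that \eqref{rep} is unnecessary here is correct --- the paper likewise bypasses it entirely.
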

\begin{proof}
    We have already $(B,\circ _B)$ is a mock-Lie algebra. Moreover,  it follows from Theorem \ref{MLietoALeibniz} that $(B,\{\c,\c\}_\mathcal{H})$ is an anti-Leibniz algebra. 
    Now, for any $a,b,c\in B$, we have
    \begin{eqnarray*}
        &&\{a,b\circ_B c\}_\mathcal{H}+b\circ_B\{a,c\}_\mathcal{H}+c\circ_B\{a,b\}_\mathcal{H}\\
        &=&\pi(\mathcal{H}a)(b\circ_B c)+b\circ_B(\pi(\mathcal{H}a)c)
        + c\circ_B(\pi(\mathcal{H}a)b)\\
        & \overset{\eqref{actLie}}{=}&0.
    \end{eqnarray*}
    On the other hand, we have
    \begin{align*}
        \{a\circ_Bb,c\}_{\mathcal{H}}=&\pi(\mathcal{H}(a\circ_Bb))(c) \\
        =&\pi(\mathcal{H}(a)\circ\mathcal{H}(b))(c)\\
        =&\pi(\mathcal{H}(\pi(\mathcal{H}(a))b))(c)\\
        =&\{\{a,b\}_\mathcal{H},c\}_\mathcal{H}.
    \end{align*}
    Then,   Eqs \eqref{eq:trileib1} and \eqref{eq:trileib2} are satisfied. 
\end{proof}
\begin{ex}
    Let $A$ be a $3$-dimensional vector space with basis $\mathcal{B}=\{e_1,e_2,e_3\}$. Define the product $\circ$ by $e_1\circ e_1=e_2$ and the linear map $\mathcal{H}$ by
    $$\mathcal{H}:=\begin{pmatrix}
  1 &  0& 0 \\
  0 & 1 & 1\\
  0 & 0 & 0 \\
\end{pmatrix}.$$
Then $(A,\circ)$ is a mock-Lie algebra and $\mathcal{H}$ is a homomorphic embedding tensor on $A$. According to Theorem \ref{MLietoAntiLeibTri}, the  bracket $\{e_1,e_1\}=e_2$ together with the product $\circ$ endow $A$ with an anti-Leibniz trialgebra structure.
\end{ex}
On the direct sum vector space $A\oplus B$,  define the two bilinear maps $\circ _{A\oplus B}$ and $\{\cdot,\cdot\}_{A\oplus B}$  by
\begin{eqnarray}
(x+a)\circ_{A\oplus B}(y+b)&=&x
\circ_A y+a\circ_Bb,\\
    \{x+a,y+b\}_{A\oplus B}&=&x\circ_A y+\pi(x)b\quad\forall x,y\in A,a,b\in B.
\end{eqnarray}

\begin{pro}
  With the above notation, $(A\oplus B,\{\cdot,\cdot\}_{A\oplus B},\circ _{A\oplus B})$ is an anti-Leibniz trialgebra, called the hemisemidirect product anti-Leibniz trialgebra and denoted by $A\oplus_{ALeibT}B$.

\end{pro}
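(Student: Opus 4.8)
The plan is to verify the three conditions of Definition \ref{anti-Leib trialg} for the triple $(A\oplus B,\{\cdot,\cdot\}_{A\oplus B},\circ_{A\oplus B})$, splitting every computation into its $A$-component and its $B$-component.

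First I would observe that $(A\oplus B,\circ_{A\oplus B})$ is simply the direct product of the mock-Lie algebras $(A,\circ_A)$ and $(B,\circ_B)$: the product $\circ_{A\oplus B}$ is defined componentwise with no cross terms, so commutativity and the Jacobi identity \eqref{Jacoby id} hold in each factor and therefore on $A\oplus B$. Hence $(A\oplus B,\circ_{A\oplus B})$ is a mock-Lie algebra. Next, since $(A,B)$ is a \textsf{MLie-Act} pair, $(B,\pi)$ is in particular a representation of $(A,\circ_A)$, so $(A,B)$ is a \textsf{MLie-Rep} pair and the bracket $\{x+a,y+b\}_{A\oplus B}=x\circ_A y+\pi(x)b$ is exactly the hemisemi-direct product bracket of Proposition \ref{hsdirect Lie}. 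I would therefore invoke that proposition to conclude that $(A\oplus B,\{\cdot,\cdot\}_{A\oplus B})$ is an anti-Leibniz algebra, which disposes of the anti-Leibniz identity at once.

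It then remains to check the two compatibility identities. For \eqref{eq:trileib1}, I would write $X=x+a$, $Y=y+b$, $Z=z+c$, expand the three terms with the two structure maps, and collect components: the $A$-part becomes $x\circ_A(y\circ_A z)+y\circ_A(x\circ_A z)+z\circ_A(x\circ_A y)$, which vanishes by \eqref{Jacoby id} after using commutativity of $\circ_A$, while the $B$-part becomes $\pi(x)(b\circ_B c)+b\circ_B\pi(x)c+c\circ_B\pi(x)b$, which is precisely the action axiom \eqref{actLie} (with $a,b$ replaced by $b,c$). For \eqref{eq:trileib2}, I would compute that $\{X\circ_{A\oplus B}Y,Z\}_{A\oplus B}$ and $\{\{X,Y\}_{A\oplus B},Z\}_{A\oplus B}$ both equal $(x\circ_A y)\circ_A z+\pi(x\circ_A y)c$; this is automatic because $X\circ_{A\oplus B}Y$ and $\{X,Y\}_{A\oplus B}$ share the same $A$-component $x\circ_A y$, and $\{\cdot,Z\}_{A\oplus B}$ reads off its first argument only through that component.

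I do not expect a genuine obstacle: the computation is bookkeeping. The one point worth emphasising is that \eqref{eq:trileib1} bundles a mock-Lie statement about $A$ (Jacobi plus commutativity) with the action compatibility \eqref{actLie} on $B$, which is exactly what the definition of an action of one mock-Lie algebra on another was engineered to supply; so the ``hard part'', such as it is, is merely recognising \eqref{actLie} inside the $B$-component and \eqref{Jacoby id} inside the $A$-component.
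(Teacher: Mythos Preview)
Your proposal is correct and follows essentially the same approach as the paper: invoke Proposition \ref{hsdirect Lie} for the anti-Leibniz structure, note that $\circ_{A\oplus B}$ is the componentwise product of two mock-Lie algebras, and verify \eqref{eq:trileib1} by splitting into an $A$-part (Jacobi identity) and a $B$-part (the action axiom \eqref{actLie}). Your treatment of \eqref{eq:trileib2} is actually more explicit than the paper's, which simply writes ``The same for Eq.~\eqref{eq:trileib2}''; your observation that $\{\cdot,Z\}_{A\oplus B}$ only sees the $A$-component of its first argument is exactly the right reason.
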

\begin{proof}
    It follows from Proposition \ref{hsdirect Lie} that $(A\oplus B,\{\c,\c\}_{A\oplus B})$ is a  anti-Leibniz algebra.
On the other hand, it is straightforward to observe that $(A\oplus B,\circ _{A\oplus B})$ is a mock-Lie algebra.
Let $x,y,z\in A$ and $a,b,c\in B$, according to Eq. \eqref{actLie}, we have
   {\small \begin{align*}
        &\{x+a,(y+b)\circ_{A\oplus B}(z+c)\}_{A\oplus B}+(y+b)\circ_{A\oplus B}\{x+a,z+c\}_{A\oplus B}+(z+c)\circ_{A\oplus B}\{x+a,y+b\}_{A\oplus B}\\=&x\circ_A (y\circ_A z)+\pi(x)(b\circ_Bc)+y\circ_A(x\circ_A z)+b\circ_B\pi(x)c+z\circ_A(x\circ_A y)+c\circ_B\pi(x)b\\=& 0.
    \end{align*}}
    Hence, the Eq. \eqref{eq:trileib1} is satisfied. The same for Eq. \eqref{eq:trileib2}. This completes the proof.
\end{proof}
\begin{thm}
Let $(A,B)$ be a \textsf{MLie-Act} pair. A linear map $\mathcal{H}:B\to A$ is a homomorphic embedding tensor on the mock-Lie algebra $(A,\circ_A)$ if and only if  the graph ${\sf Gr}(\mathcal{H}) = \{\mathcal{H}(a)+a\ |\ a \in B\}$ is a triLeibniz subantialgebra of the hemisemi-direct product anti-Leibniz trialgebra $A\oplus_{ALeibT}B$.
\end{thm}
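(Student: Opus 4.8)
The plan is to prove the equivalence by the standard graph argument, entirely parallel to Theorem \ref{graphMLie}: compute the two operations $\circ_{A\oplus B}$ and $\{\cdot,\cdot\}_{A\oplus B}$ on pairs of elements of the form $\mathcal H(a)+a$, and observe that the graph is closed under both precisely when $\mathcal H$ is an embedding tensor (for $\{\cdot,\cdot\}_{A\oplus B}$) and a morphism (for $\circ_{A\oplus B}$), i.e. precisely when $\mathcal H$ is a homomorphic embedding tensor.

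First I would take $\mathcal H(a)+a,\ \mathcal H(b)+b\in {\sf Gr}(\mathcal H)$ and compute
\[
\{\mathcal H(a)+a,\ \mathcal H(b)+b\}_{A\oplus B}=\mathcal H(a)\circ_A\mathcal H(b)+\pi(\mathcal H(a))b,
\]
so that the graph is closed under $\{\cdot,\cdot\}_{A\oplus B}$ iff $\mathcal H(a)\circ_A\mathcal H(b)=\mathcal H(\pi(\mathcal H(a))b)$, i.e. iff $\mathcal H$ is an embedding tensor on the \textsf{MLie-Rep} pair $(A,B)$. Next,
\[
(\mathcal H(a)+a)\circ_{A\oplus B}(\mathcal H(b)+b)=\mathcal H(a)\circ_A\mathcal H(b)+a\circ_B b,
\]
so closure under $\circ_{A\oplus B}$ is equivalent to $\mathcal H(a)\circ_A\mathcal H(b)=\mathcal H(a\circ_B b)$, i.e. $\mathcal H$ being a morphism of mock-Lie algebras from $(B,\circ_B)$ to $(A,\circ_A)$. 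Combining the two, ${\sf Gr}(\mathcal H)$ is a subalgebra of $A\oplus_{ALeibT}B$ (closed under both structure maps) if and only if $\mathcal H$ is simultaneously an embedding tensor and a morphism, which is exactly the definition of a homomorphic embedding tensor on $(A,B)$.

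One remark worth making is that since ${\sf Gr}(\mathcal H)$ is the graph of a linear map it is automatically a linear subspace, and that the inherited operations on ${\sf Gr}(\mathcal H)\cong B$ under the identification $\mathcal H(a)+a\leftrightarrow a$ are exactly the anti-Leibniz trialgebra operations $\{\cdot,\cdot\}_{\mathcal H}$ and $\circ_B$ produced in Theorem \ref{MLietoAntiLeibTri}; so the statement both characterizes homomorphic embedding tensors and re-derives that the resulting structure on $B$ is an anti-Leibniz trialgebra, now as a subalgebra of the hemisemidirect product. There is no real obstacle here: the only thing to be careful about is bookkeeping, namely keeping the $\{\cdot,\cdot\}_{A\oplus B}$ part (which only sees $\pi$, not $\circ_B$) separate from the $\circ_{A\oplus B}$ part (which only sees $\circ_B$), so that the embedding-tensor condition and the morphism condition are genuinely disentangled. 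The compatibility axioms \eqref{eq:trileib1}--\eqref{eq:trileib2} need not be checked by hand for the graph, since they hold automatically in $A\oplus_{ALeibT}B$ and pass to any subalgebra.
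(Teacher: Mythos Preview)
Your proposal is correct and is precisely the argument the paper has in mind: the paper's own proof is just the single line ``Similar to Theorem \ref{graphMLie}'', so your explicit computation of the two operations on graph elements and the disentangling of the embedding-tensor condition (from $\{\cdot,\cdot\}_{A\oplus B}$) and the morphism condition (from $\circ_{A\oplus B}$) is exactly the intended expansion. Your additional remark identifying the induced operations on ${\sf Gr}(\mathcal H)\cong B$ with those of Theorem \ref{MLietoAntiLeibTri} is a nice bonus the paper does not spell out.
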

\begin{proof}
Similar to Theorem \ref{graphMLie}.
\end{proof}
\begin{pro}
    Let $(A,B)$ be a \textsf{MLie-Act} pair. A linear map $\mathcal H: B \to A$ is a homomorphic embedding tensor on $B$ over $A$ if and only if the map
$$N_{\mathcal H}: (A \oplus B) \to (A \oplus B),\ (x+a) \mapsto \mathcal H(a),\quad \forall x\in A,a\in B,$$
is a Nijenhuis operator on the  hemisemi-direct product $A \oplus_{ALeibT} B$.
\end{pro}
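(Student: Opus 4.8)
The plan is to run the same kind of direct verification used for the Nijenhuis characterization of embedding tensors on $A\oplus_{ALeib}V$, only now on the trialgebra $A\oplus_{ALeibT}B$, which carries the two products $\circ_{A\oplus B}$ and $\{\cdot,\cdot\}_{A\oplus B}$. Being a Nijenhuis operator on this trialgebra means satisfying the identity \eqref{NijCond} separately for each of the two products, so the proof splits into two parallel computations. I would first record the elementary facts that $N_{\mathcal H}$ is linear, that $N_{\mathcal H}(A\oplus B)\subseteq A$, and that $N_{\mathcal H}$ vanishes identically on $A$; in particular $N_{\mathcal H}^{2}=0$, which already annihilates several of the terms occurring in \eqref{NijCond}.

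For the anti-Leibniz bracket $\{\cdot,\cdot\}_{A\oplus B}$ I would take arbitrary $X=x+a$ and $Y=y+b$ and expand \eqref{NijCond}. Using $\{x+a,y+b\}_{A\oplus B}=x\circ_{A}y+\pi(x)b$ together with $N_{\mathcal H}(x+a)=\mathcal H(a)$, the left-hand side collapses to $\mathcal H(a)\circ_{A}\mathcal H(b)$, while on the right-hand side the only contribution surviving the outer $N_{\mathcal H}$ is the $B$-component $\pi(\mathcal H(a))b$, which produces $\mathcal H(\pi(\mathcal H(a))b)$. Hence the bracket part of \eqref{NijCond} is equivalent to $\mathcal H(a)\circ_{A}\mathcal H(b)=\mathcal H(\pi(\mathcal H(a))b)$, i.e. to $\mathcal H$ being an embedding tensor in the sense of \eqref{EmTen}; this part of the argument is word-for-word the computation already carried out for $A\oplus_{ALeib}V$.

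I would then perform the analogous expansion of \eqref{NijCond} for the mock-Lie product $\circ_{A\oplus B}$, using $(x+a)\circ_{A\oplus B}(y+b)=x\circ_{A}y+a\circ_{B}b$; the target is to see that this reduces exactly to the homomorphism condition $\mathcal H(a\circ_{B}b)=\mathcal H(a)\circ_{A}\mathcal H(b)$, i.e. to $\mathcal H$ being a morphism of mock-Lie algebras. Granting both reductions, $N_{\mathcal H}$ is a Nijenhuis operator on $A\oplus_{ALeibT}B$ precisely when $\mathcal H$ is simultaneously an embedding tensor and a mock-Lie morphism, which is exactly the definition of a homomorphic embedding tensor on the \textsf{MLie-Act} pair $(A,B)$. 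The step requiring the most care is this last computation: one must track carefully which summands of \eqref{NijCond} land in the subspace $A$ (hence are killed by $N_{\mathcal H}$) and which land in $B$, so that the $\circ_{A\oplus B}$-part of the identity distils precisely the homomorphism property and nothing stronger; the bracket part, by contrast, is routine and parallels the earlier proposition verbatim.
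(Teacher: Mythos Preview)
The paper does not supply a proof of this proposition, so there is nothing to compare against; your proposal must therefore stand on its own.  The bracket half of your argument is fine and matches the earlier Nijenhuis computation verbatim, but the mock-Lie half does \emph{not} reduce to the homomorphism condition as you claim.  Carry it out: since $(x+a)\circ_{A\oplus B}(y+b)=x\circ_A y+a\circ_B b$ has no cross terms, one finds
\[
N_{\mathcal H}(X)\circ_{A\oplus B}Y=\mathcal H(a)\circ_A y,\qquad
X\circ_{A\oplus B}N_{\mathcal H}(Y)=x\circ_A \mathcal H(b),\qquad
N_{\mathcal H}(X\circ_{A\oplus B}Y)=\mathcal H(a\circ_B b),
\]
all three lying in $A$.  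Hence the right-hand side of \eqref{NijCond} is $N_{\mathcal H}$ applied to an element of $A$, which is $0$, while the left-hand side is $\mathcal H(a)\circ_A\mathcal H(b)$.  The Nijenhuis identity for $\circ_{A\oplus B}$ therefore reads $\mathcal H(a)\circ_A\mathcal H(b)=0$ for all $a,b\in B$, which is strictly stronger than, and not equivalent to, $\mathcal H(a\circ_B b)=\mathcal H(a)\circ_A\mathcal H(b)$.

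So under your interpretation of ``Nijenhuis operator on the trialgebra'' (the identity \eqref{NijCond} imposed separately on each of the two products) the proposition is actually false, not merely unproved.  Either the intended notion of Nijenhuis operator on an anti-Leibniz trialgebra is something other than the obvious product-by-product condition---the paper never defines it---or the statement itself needs to be amended.  In any case the very step you flagged as ``requiring the most care'' is where the argument breaks, and it cannot be repaired by more careful bookkeeping alone.
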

\begin{defi}\label{antitriass}
An \emph{anti-associative trialgebra}  is a vector space $A$ that comes equipped with three binary operations, $\rprod$ (left), $\lprod$ (right), $\mprod$ (middle), where $(A,\lprod,\rprod)$ is an anti-associative dialgebra, $(A,\mprod)$ is an anti-associative algebra and for all $x, y, z \in A$:
\begin{align}
   &(x \rprod y) \rprod z + x \rprod (y \,\mprod\, z)=0, \label{axiom6} \\&
   (x \,\mprod\, y) \rprod z + x \,\mprod\, (y \rprod z)=0, \label{axiom7}\\&
   (x \rprod y) \,\mprod\, z + x \,\mprod\, (y \lprod z)=0, \label{axiom8} \\
   &(x \lprod y) \,\mprod\, z + x \lprod (y \,\mprod\, z)=0, \label{axiom9} \\&
   (x \,\mprod\, y) \lprod z + x \lprod (y \lprod z)=0. \label{axiom10}
   \end{align}
   \end{defi}
\begin{defi}
    Let $(A,\star_A)$ and $(B,\star_B)$ be two anti-associative algerbas. We will say that $A$ acts on $B$ if there is two linear maps $\varrho,\mu:A\rightarrow End(B)$, such that $(B,\varrho,\mu)$ is a representation of $(A,\star_A)$ and for any $x\in A$ and $a,b\in B$ we have
    \begin{align}\label{act}
        &\varrho(x)(a\star_B b)=-\varrho(x)a\star_B b,\qquad \mu(x)(a\star_B b)=-a\star_B \mu(x)b,\nonumber
        \\& \mu(x)a\star_B b=-a\star_B \varrho(x)b.
    \end{align}
    We denote an action of an anti-associative algebra by $(B,\varrho,\mu,\star_B)$.
\end{defi}
\begin{pro}
  The tuple $(B,\varrho,\mu,\star_B)$ is an action of an anti-associative algebra $(A,\star_A)$ if and only if $(A\oplus B,\star_{A\oplus B})$ is an anti-associative algebra, where
   $$(x+a)\star_{A\oplus B} (y+b)=x\star_A y+\varrho(x) b+\mu(y)a+a\star_B b,\quad\forall \ x,y\in A,\, a,b\in B.$$
\end{pro}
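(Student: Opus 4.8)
The plan is to prove both implications at once by expanding the anti-associator of $(A\oplus B,\star_{A\oplus B})$ on three arbitrary elements $X=x+a$, $Y=y+b$, $Z=z+c$ (with $x,y,z\in A$, $a,b,c\in B$) and matching it block by block with the data defining an action. First I would record $X\star_{A\oplus B}Y = x\star_A y+\big(\varrho(x)b+\mu(y)a+a\star_B b\big)$, keeping the $A$-component $x\star_A y$ separate from the $B$-component $\varrho(x)b+\mu(y)a+a\star_B b$; applying $\star_{A\oplus B}$ once more then gives $(X\star_{A\oplus B}Y)\star_{A\oplus B}Z$ and $X\star_{A\oplus B}(Y\star_{A\oplus B}Z)$, each a sum of one $A$-term and several $B$-terms, whose total is the anti-associator $(X,Y,Z)^a$ in $A\oplus B$.

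I would then decompose this total into its $A$- and $B$-parts. The $A$-part is $(x\star_A y)\star_A z + x\star_A(y\star_A z)$, which is zero since $A$ is anti-associative in both statements. The $B$-part, a sum of fourteen terms, I would regroup according to which of $a,b,c$ occurs in each term, producing seven pairs whose sums are: $\big(\varrho(x\star_A y)+\varrho(x)\varrho(y)\big)c$ (the $c$-only terms); $\big(\mu(y\star_A z)+\mu(z)\mu(y)\big)a$ (the $a$-only terms); $\big(\mu(z)\varrho(x)+\varrho(x)\mu(z)\big)b$ (the $b$-only terms); $\mu(z)(a\star_B b)+a\star_B(\mu(z)b)$ (the terms in $a,b$); $(\varrho(x)b)\star_B c+\varrho(x)(b\star_B c)$ (the terms in $b,c$); $(\mu(y)a)\star_B c+a\star_B(\varrho(y)c)$ (the terms in $a,c$); and $(a\star_B b)\star_B c+a\star_B(b\star_B c)$ (the terms in $a,b,c$). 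These vanish identically exactly when the three identities of \eqref{repantiass} hold (first three pairs), the three identities of \eqref{act} hold (next three pairs), and $(B,\star_B)$ is anti-associative (last pair); together with anti-associativity of $A$, this is precisely the assertion that $(B,\varrho,\mu,\star_B)$ is an action of $(A,\star_A)$.

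For the ``only if'' direction I would observe that each of the seven block identities can be isolated by a suitable choice of the free variables (e.g.\ $a=b=0$ recovers the $c$-only identity, $x=y=z=0$ recovers anti-associativity of $B$, $x=y=c=0$ recovers the $\{a,b\}$-identity, and so on), so vanishing of $(X,Y,Z)^a$ for all $X,Y,Z$ forces every block condition. The converse is immediate, since all blocks being zero makes the whole anti-associator vanish, i.e.\ $A\oplus B$ is anti-associative. The only real work here is bookkeeping --- correctly dispatching $\varrho$ and $\mu$ when $\star_{A\oplus B}$ is applied to an element with a nonzero $B$-component, and losing none of the fourteen terms in the regrouping; there is no conceptual obstacle, the argument running exactly parallel to the semidirect-product characterisations of anti-associative representations and of mock-Lie actions established earlier in the paper.
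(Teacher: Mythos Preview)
Your proposal is correct and follows essentially the same route as the paper: a direct expansion of the anti-associator $(X,Y,Z)^a$ on $A\oplus B$, followed by matching the resulting $B$-terms against the defining identities \eqref{repantiass} and \eqref{act}. The paper's proof simply writes out the expansion, cancels the $A$- and $B$-anti-associators (both assumed), and reads off the remaining six conditions; your seven-block regrouping and the explicit isolation argument for the ``only if'' direction are a bit more detailed, but the underlying computation is identical.
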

\begin{proof}
   Let $(A,\star_A)$ and $(B,\star_B)$ be two anti-associative algebras, then for any $x,y,z\in A$ and $a,b,c\in B$, we have
   \begin{align*}
       &((x+a)\star_{A\oplus B}(y+b))\star_{A\oplus B}(z+c)+(x+a)\star_{A\oplus B}((y+b)\star_{A\oplus B}(z+c))\\=&(x\star_A y)\star_A z+\varrho(x\star_A y)c+\mu(z)(\varrho(x)b+\mu(y)a+a\star_B b)+(\varrho(x)b+\mu(y)a+a\star_B b)\star_B c\\&+ x\star_A (y\star_A z)+\varrho(x)(\varrho(y)c+\mu(z)b+b\star_B c)+\mu(y\star_A z)a+a\star_B (\varrho(y)c+\mu(z)b+b\star_B c)\\=&\varrho(x\star_A y)c+\mu(z)(\varrho(x)b+\mu(y)a+a\star_B b)+(\varrho(x)b+\mu(y)a)\star_B c\\&+\varrho(x)(\varrho(y)c+\mu(z)b+b\star_B c)+\mu(y\star_A z)a+a\star_B (\varrho(y)c+\mu(z)b).
   \end{align*}
   Therefore, $(A\oplus V,\star_{A\oplus V})$ is an anti-associative algebra if and only if $(B,\varrho,\mu)$ is a representation of $(A,\star)$ and
   $$\varrho(x)(b\star_B c)=-\varrho(x)b\star_B c,\quad \mu(z)(a\star_B b)=-a\star_B \mu(z)b\quad \text{and}\quad \mu(y)a\star_B c=-a\star_B \varrho(y)c.$$
   Thus, $(A\oplus B,\star_{A\oplus B})$ is an anti-associative algebra if and only if $(B,\varrho,\mu,\star_B)$ is an action of $(A,\star_A)$.
\end{proof}

\begin{defi}
       A linear map $\mathcal H: B \to A$ is called a homomorphic embedding tensor on the anti-associative algebra $(A,\star_A)$ with respect to an action $(B,\varrho,\mu,\star_B)$ if it is both an embedding tensor and a morphism, that is
      \begin{align}\label{homembtensAA}
  & \mathcal{H}(a\star_Bb)=\mathcal H(a)\star_A \mathcal H(b),\ \forall\ a,b \in B.
      \end{align}
\end{defi}
 A homomorphic embedding tensor on the anti-associative algebra $(A,\star_A)$ with respect to the adjoint representation $(A,l,r)$ is called homomorphic averaging operator.
\begin{pro}
Let $(B,\varrho,\mu,\star_B)$ be an action of an anti-associative algebra $(A,\star_A)$. Then $(B,\pi=\varrho+\mu,\circ_B)$ is an action of the induced mock Lie algebra $(A,\circ_A)$, where $"\circ_A"$ and $"\circ_B"$ are the anti-commutator given in \eqref{anticom}.
    Moreover, if $\mathcal{H}:B\to A$ is a homomorphic embedding tensor on the anti-associative algebra $(A,\star_A)$ with respect to the action $(B,\varrho,\mu,\star_B)$. Then $\mathcal{H}$ is a homomorphic embedding tensor on the mock-Lie algebra $(A,\circ_A)$ with respect to the action $(B,\pi,\circ_B)$.
\end{pro}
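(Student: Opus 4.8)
The statement has two parts. First, I would verify that $(B,\pi=\varrho+\mu,\circ_B)$ is an action of the induced mock-Lie algebra $(A,\circ_A)$; second, I would show that the homomorphic embedding tensor $\mathcal H$ remains a homomorphic embedding tensor after passing to anti-commutators. For the first part, recall that $(B,\pi=\varrho+\mu)$ is already a representation of $(A,\circ_A)$ by Proposition \ref{embtensAAandMC}, so the only thing to check is the compatibility identity
\begin{align*}
\pi(x)(a\circ_B b)+a\circ_B\pi(x)b+b\circ_B\pi(x)a=0,\quad\forall\ x\in A,\ a,b\in B.
\end{align*}
Here I would expand $\pi=\varrho+\mu$, $\circ_B$ into $a\star_B b+b\star_B a$, and use the three action relations of Definition for $(B,\varrho,\mu,\star_B)$ together with the representation relations \eqref{repantiass} (the latter needed to handle terms such as $\varrho(x)(b\star_B a)$ and $\mu(x)(a\star_B b)$). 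Each of the (at most) twelve resulting monomials should cancel in pairs exactly as in the proof of Proposition \ref{embtensAAandMC}; this is bookkeeping, not a genuine obstacle.

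For the second part, I already know from Proposition \ref{embtensAAandMC} that $\mathcal H$ is an embedding tensor on $(A,\circ_A)$ with respect to $(B,\pi)$, so what remains is to check that $\mathcal H$ is a \emph{morphism} for the induced mock-Lie brackets, i.e.
\begin{align*}
\mathcal H(a\circ_B b)=\mathcal H(a)\circ_A\mathcal H(b),\quad\forall\ a,b\in B.
\end{align*}
This follows immediately by linearity of $\mathcal H$, the definition $a\circ_B b=a\star_B b+b\star_B a$, the morphism property \eqref{homembtensAA} applied twice, and the definition of $\circ_A$. So once the first part is done, the second part is a one-line computation.

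**Main obstacle.** There is no deep obstacle; the work is entirely in organizing the first computation so that the twelve terms are visibly grouped into the three pairs killed by the three action axioms and the three pairs killed by the representation axioms \eqref{repantiass}. The one point requiring a little care is that the compatibility identity for the induced action is \emph{symmetric} in $a,b$ in a way that mixes $\varrho$- and $\mu$-type terms, so when expanding $\pi(x)(a\circ_B b)=(\varrho+\mu)(x)(a\star_B b+b\star_B a)$ one must not forget the "crossed" terms $\varrho(x)(b\star_B a)$ and $\mu(x)(a\star_B b)$, whose cancellation uses the representation relations rather than the action relations. I would lay out the full expansion in a single displayed \texttt{align*} (as in the proof of Proposition \ref{embtensAAandMC}), collect terms, and conclude that the sum is zero, hence $(B,\pi,\circ_B)$ is an action; then append the short morphism computation to finish.
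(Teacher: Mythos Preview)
Your plan matches the paper's proof exactly: invoke Proposition \ref{embtensAAandMC} for the representation and embedding-tensor parts, expand the compatibility identity \eqref{actLie} into twelve monomials and cancel in pairs, then do the one-line morphism check. One small correction: the ``crossed'' terms $\varrho(x)(b\star_B a)$ and $\mu(x)(a\star_B b)$ you flag are in fact handled directly by the action relations \eqref{act} (applied with $a,b$ swapped), so only \eqref{act} is needed and the representation relations \eqref{repantiass} never enter; the obstacle you anticipate does not arise.
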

\begin{proof}
According to Proposition \ref{embtensAAandMC} $(B,\pi)$ is a representation of $(A,\circ)$,  it remains to check \eqref{actLie}. For any $x\in A$ and $a,b\in B$, by \eqref{act} we have
\begin{align*}
   &\pi(x)(a\circ_B b)+a\circ_B \pi(x)b+b\circ_B \pi(x)a\\=&(\varrho+\mu)(x)(a\star_B b+b\star_B a)+a\star_B(\varrho+\mu)(x)b+(\varrho+\mu)(x)b\star_B a+b\star_B(\varrho+\mu)(x)a\\&+(\varrho+\mu)(x)a\star_B b\\=&\varrho(x)(a\star_B b)+\varrho(x)(b\star_B a)+\mu(x)(a\star_B b)+\mu(x)(b\star_B a)+a\star_B(\varrho(x)b)+a\star_B(\mu(x)b)\\&+(\varrho(x)b)\star_B a+(\mu(x)b)\star_B a+b\star_B(\varrho(x)a)+b\star_B(\mu(x)a)+(\varrho(x)a)\star_B b+(\mu(x)a)\star_B b\\=&0.
\end{align*}
On the other hand, if $\mathcal{H}$ is a homomorphic embedding tensor on the anti-associative algebra $(A,\star_A)$ with respect to $(B,\varrho,\mu,\star_B)$, then according to Proposition \ref{embtensAAandMC}  $\mathcal{H}$ is an embedding tensor on the mock Lie algebra $(A,\circ_A)$ with respect to $(B,\pi)$. Now, it remains to verify that $\mathcal{H}$ is also a morphism. For any $a,b\in B$, we have
\begin{align*}
    \mathcal{H}(a\circ_B b)&=\mathcal{H}(a\star_B b)+\mathcal{H}(b\star_B a)\\&=\mathcal{H}(a)\star_A \mathcal{H}(b)+\mathcal{H}(b)\star_A \mathcal{H}(a)\\&=\mathcal{H}(a)\circ_A\mathcal{H}(b).
\end{align*}
Hence, the result follows.
\end{proof}
\begin{cor}
    A homomorphic averaging operator $\mathcal{H}$ on a anti-associative algebra $(A,\star_A)$ is a homomorphic averaging operator on the induced mock-Lie algebra $(A,\circ_A)$.
\end{cor}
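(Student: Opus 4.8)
The plan is to obtain this as a direct specialization of the preceding Proposition to the adjoint action. First, recall that by definition a homomorphic averaging operator $\mathcal{H}$ on $(A,\star_A)$ is exactly a homomorphic embedding tensor on $(A,\star_A)$ with respect to the adjoint representation $(A,l,r)$, where $l(x)(y)=r(y)(x)=x\star_A y$. So I would begin by checking that these data, packaged as $(A,l,r,\star_A)$, form an \emph{action} of $(A,\star_A)$ on itself in the sense of the definition used above. This is immediate: the three required identities are precisely the three ways of rewriting the vanishing of the anti-associator $(x\star_A y)\star_A z+x\star_A(y\star_A z)=0$, e.g. $l(x)(a\star_A b)=x\star_A(a\star_A b)=-(x\star_A a)\star_A b=-\big(l(x)a\big)\star_A b$, and similarly for the $\mu$-identity and the mixed identity.

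Next, I would apply the preceding Proposition with $(B,\varrho,\mu,\star_B)=(A,l,r,\star_A)$. It yields that $(A,\pi=l+r,\circ_A)$ is an action of the induced mock-Lie algebra $(A,\circ_A)$ on itself and that $\mathcal{H}$ is a homomorphic embedding tensor on $(A,\circ_A)$ with respect to this action.

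Finally, I would identify the induced action with the adjoint representation of $(A,\circ_A)$. Indeed, for all $x,y\in A$,
\[
\big(l(x)+r(x)\big)(y)=x\star_A y+y\star_A x=x\circ_A y=L_x(y),
\]
so $\pi=L$, and the action of $(A,\circ_A)$ on itself produced by the Proposition is precisely its adjoint representation. Therefore $\mathcal{H}$, being a homomorphic embedding tensor for the adjoint representation of $(A,\circ_A)$, is by definition a homomorphic averaging operator on $(A,\circ_A)$.

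There is essentially no obstacle here; the entire content sits in the previous Proposition. The only points to verify — that the adjoint representation of an anti-associative algebra is an action of the algebra on itself, and the identification $l+r=L$ — are one-line computations using anti-associativity and the definition of the anti-commutator \eqref{anticom}.
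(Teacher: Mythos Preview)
Your proposal is correct and is precisely the intended specialization: the paper states this corollary without proof, as an immediate consequence of the preceding Proposition applied to the adjoint action $(B,\varrho,\mu,\star_B)=(A,l,r,\star_A)$. Your verification that the adjoint representation of $(A,\star_A)$ satisfies the action identities \eqref{act} (all three reducing to anti-associativity) and that $l+r=L$ is exactly what fills in the routine details.
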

\begin{pro}\label{AntiAsstoAntiAssTri}
    Let $\mathcal H: B \to A$ be a homomorphic embedding tensor on the anti-associative algebra $(A,\star_A)$ with respect to an action $(B,\varrho,\mu,\star_B)$. Then, $B$ carries an anti-associative trialgebra structure given by \begin{align*}
        a\triangleright_{\mathcal{H}} b=\varrho(\mathcal{H}(a))b,\quad a\triangleleft_{\mathcal{H}} b=\mu(\mathcal{H}(b))a,\quad a\triangle_{\mathcal{H}} b=a \star_B b,\,  \forall \ a,b\in B. 
    \end{align*}   
\end{pro}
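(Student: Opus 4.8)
The plan is to verify the defining data of Definition \ref{antitriass} in three stages. First, $(B,\triangleright_{\mathcal{H}},\triangleleft_{\mathcal{H}})$ must be an anti-associative dialgebra: since a homomorphic embedding tensor is in particular an embedding tensor and the action $(B,\varrho,\mu,\star_B)$ is in particular a representation, this is precisely the content of Proposition \ref{antiasstoantidiass} applied to $\mathcal{H}$ — and, crucially, this step does not invoke the morphism property \eqref{homembtensAA}. Second, $(B,\triangle_{\mathcal{H}})$ must be an anti-associative algebra; but $a\triangle_{\mathcal{H}}b=a\star_B b$ and $(B,\star_B)$ is anti-associative by the very definition of an action, so there is nothing to check. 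The real work is the third stage, the five mixed axioms \eqref{axiom6}--\eqref{axiom10}, and the strategy there is to expand each one and recognize it as a single structural identity specialized along $\mathcal{H}$.

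Concretely, each of \eqref{axiom7}, \eqref{axiom8}, \eqref{axiom9} unwinds directly to one of the three action-compatibility relations in \eqref{act}, with $x$ set to $\mathcal{H}$ of one of the arguments. For instance, for \eqref{axiom9},
\begin{align*}
(a\triangleright_{\mathcal{H}}b)\triangle_{\mathcal{H}}c+a\triangleright_{\mathcal{H}}(b\triangle_{\mathcal{H}}c)=\big(\varrho(\mathcal{H}(a))b\big)\star_B c+\varrho(\mathcal{H}(a))(b\star_B c)=0
\end{align*}
by the first relation in \eqref{act} with $x=\mathcal{H}(a)$; \eqref{axiom7} uses the second relation and \eqref{axiom8} the third, in exactly the same manner. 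The two remaining axioms \eqref{axiom6} and \eqref{axiom10} are where the morphism hypothesis \eqref{homembtensAA} enters: combining it with the representation axioms \eqref{repantiass} gives, for \eqref{axiom10},
\begin{align*}
(a\triangle_{\mathcal{H}}b)\triangleright_{\mathcal{H}}c+a\triangleright_{\mathcal{H}}(b\triangleright_{\mathcal{H}}c)&=\varrho\big(\mathcal{H}(a\star_B b)\big)c+\varrho(\mathcal{H}(a))\big(\varrho(\mathcal{H}(b))c\big)\\&=\varrho\big(\mathcal{H}(a)\star_A\mathcal{H}(b)\big)c-\varrho\big(\mathcal{H}(a)\star_A\mathcal{H}(b)\big)c=0,
\end{align*}
and \eqref{axiom6} is the mirror computation, using $\mu(y)\mu(x)=-\mu(x\star_A y)$ from \eqref{repantiass} in place of the $\varrho$-relation. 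Assembling the three stages yields the anti-associative trialgebra $(B,\triangleright_{\mathcal{H}},\triangleleft_{\mathcal{H}},\triangle_{\mathcal{H}})$.

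I do not anticipate a genuine obstacle: the proof is finite bookkeeping, and the only thing requiring care is the naming convention of Definition \ref{antitriass} (where $\rprod$ is the ``left'' and $\lprod$ the ``right'' operation), so that each axiom is matched with the correct relation among the three in \eqref{act} and the three in \eqref{repantiass}. The point worth making explicit in the write-up is the clean division of labour: the embedding-tensor property drives stage one, the anti-associativity of $\star_B$ gives stage two for free, and the extra morphism property \eqref{homembtensAA} is used exactly — and only — to close \eqref{axiom6} and \eqref{axiom10}.
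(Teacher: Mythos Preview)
Your proof is correct and follows essentially the same approach as the paper's: both reduce the dialgebra part to Proposition~\ref{antiasstoantidiass}, note that $(B,\triangle_{\mathcal H})=(B,\star_B)$ is anti-associative by hypothesis, and then verify the mixed axioms \eqref{axiom6}--\eqref{axiom10} by specializing either the representation identities \eqref{repantiass} (together with the morphism property \eqref{homembtensAA}) or the action identities \eqref{act} at $x=\mathcal H(\cdot)$. The only cosmetic difference is which two axioms are written out in full---the paper displays \eqref{axiom6} and \eqref{axiom7} and leaves the rest to ``similarly'', whereas you display \eqref{axiom9} and \eqref{axiom10} and make the division of labour (embedding tensor vs.\ morphism) explicit.
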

\begin{proof}
   We have that $(B,\triangle_{\mathcal{H}})$ is an anti-associative algebra and according to Proposition \ref{antiasstoantidiass}, $(B,\triangleright_{\mathcal{H}},\triangleleft_{\mathcal{H}})$ is an anti-associative dialgebra. It remains to check \eqref{axiom6}-\eqref{axiom10}, for any $a,b,c\in B$, we have
   \begin{eqnarray*}
(a\triangleleft_{\mathcal{H}}b)\triangleleft_{\mathcal{H}} c+a \triangleleft_{\mathcal{H}}(b\triangle_{\mathcal{H}} c)&=&\mu({\mathcal{H}}(c))(\mu({\mathcal{H}}(b))a)+\mu({\mathcal{H}}(b\star_B c))a\\
&=&\mu({\mathcal{H}}(c))(\mu({\mathcal{H}}(b))a)+\mu(\mathcal{H}(b) \star_A \mathcal{H}(c))a\\
&\overset{\eqref{repantiass}}{=}&0.
   \end{eqnarray*}
   On the other hand, we have
   \begin{eqnarray*}
       (a\triangle_{\mathcal{H}} b)\triangleleft_{\mathcal{H}} c+a\triangle_{\mathcal{H}} (b\triangleleft_{\mathcal{H}} c)&=&\mu(\mathcal{H} (c))(a\star_B b)+a\star_B (\mu(\mathcal{H} (c))b)\\&\overset{\eqref{act}}{=}&0.
   \end{eqnarray*}
   Similarly, one can check \eqref{axiom8}-\eqref{axiom10}. Hence, $(B,\triangleright_{\mathcal{H}},\triangleleft_{\mathcal{H}},\triangle_{\mathcal{H}})$ is an anti-associative trialgeba.
\end{proof}

\begin{thm}\label{triasstotriLeib}
    Let $(A,\lprod,\rprod,\mprod)$ be an anti-associative trialgebra. Define new binary operations
    \begin{align*}
        \{x, y\}&=x\lprod y+y\rprod x,\quad\quad x\circ y=x\mprod y+y\mprod x.
    \end{align*}
    Then, $(A,\{\c,\c\},\circ)$ is a anti-Leibniz trialgebra.
\end{thm}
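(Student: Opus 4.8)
The plan is to verify directly the three defining conditions of an anti-Leibniz trialgebra for the pair $(A,\{\cdot,\cdot\},\circ)$: first that $(A,\circ)$ is a mock-Lie algebra, second that $(A,\{\cdot,\cdot\})$ is an anti-Leibniz algebra, and third the two compatibility identities \eqref{eq:trileib1} and \eqref{eq:trileib2}. The first two are essentially already available: $(A,\mprod)$ is by hypothesis an anti-associative algebra, so its anti-commutator $x\circ y=x\mprod y+y\mprod z$ yields a mock-Lie algebra by the general fact recalled after \eqref{anticom}; and $(A,\lprod,\rprod)$ is an anti-associative dialgebra, so Theorem~\ref{antiasstoantiLeib} shows that $\{x,y\}=x\lprod y+y\rprod x$ defines an anti-Leibniz algebra. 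So only the two mixed compatibility relations require genuine work, and these are exactly where the five cross-axioms \eqref{axiom6}--\eqref{axiom10} of Definition~\ref{antitriass} will be consumed.

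For \eqref{eq:trileib1}, I would expand $\{x,y\circ z\}+y\circ\{x,z\}+z\circ\{x,y\}$ using the definitions of both brackets. Each of the three terms splits into four summands (since each of $\{\cdot,\cdot\}$ and $\circ$ is a sum of two products), giving twelve terms of the shapes $x\lprod(y\mprod z)$, $(y\mprod z)\rprod x$, $y\mprod(x\lprod z)$, $(x\lprod z)\mprod y$, etc. The aim is to group these twelve terms into pairs, each pair being precisely the left-hand side of one of \eqref{axiom6}--\eqref{axiom10} (or of the dialgebra axioms) up to relabeling of $x,y,z$, and hence zero. I expect six cancelling pairs: for instance $x\lprod(y\mprod z)+(x\lprod y)\mprod z$ vanishes by... wait, one must be careful about which variable plays which role; the correct bookkeeping is that the terms involving only $\lprod$ and $\mprod$ assemble into instances of \eqref{axiom9} and \eqref{axiom8}, the terms involving only $\rprod$ and $\mprod$ into instances of \eqref{axiom6} and \eqref{axiom7}, and any purely $\lprod,\rprod$ terms into the inner dialgebra axiom \eqref{antiass dialg3}. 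For \eqref{eq:trileib2}, I would expand $\{x\circ y,z\}-\{\{x,y\},z\}$: the left side gives $(x\mprod y)\lprod z+(y\mprod x)\lprod z+z\rprod(x\mprod y)+z\rprod(y\mprod x)$, the right side gives $(x\lprod y)\lprod z+(y\rprod x)\lprod z+z\rprod(x\lprod y)+z\rprod(y\rprod x)$, and I would match these term by term using \eqref{axiom8}, \eqref{axiom10} and their right-handed analogues (read off \eqref{axiom6}, \eqref{axiom7} after transposition), so that the difference telescopes to zero.

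The main obstacle is purely combinatorial rather than conceptual: getting the variable substitutions right so that each generated term is literally an instance of one of the stated axioms. Because the axioms \eqref{axiom6}--\eqref{axiom10} are not symmetric in the three operations, it is easy to mispair terms. A clean way to organize the verification is to collect, in the expansion of each identity, all terms according to which two of the three operations $\{\lprod,\rprod,\mprod\}$ they involve, and then observe that within each such group the terms cancel by the appropriate axiom; this makes the bookkeeping transparent and avoids a sprawling twelve-term computation written out linearly. Once the pairing scheme is fixed the proof is a routine check, so I would present it as: "$(A,\circ)$ is mock-Lie and $(A,\{\cdot,\cdot\})$ is anti-Leibniz by the remarks above and Theorem~\ref{antiasstoantiLeib}; it remains to verify \eqref{eq:trileib1} and \eqref{eq:trileib2}, which follow by expanding and applying \eqref{axiom6}--\eqref{axiom10}," followed by the grouped computation.
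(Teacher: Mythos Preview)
Your approach is essentially identical to the paper's: both dispose of the mock-Lie and anti-Leibniz conditions by invoking the anti-commutator construction and Theorem~\ref{antiasstoantiLeib}, and then verify \eqref{eq:trileib1} and \eqref{eq:trileib2} by a direct twelve-term (resp.\ eight-term) expansion paired off via the trialgebra axioms. Two small bookkeeping corrections: in the expansion of \eqref{eq:trileib1} every one of the twelve terms carries exactly one $\mprod$, so no purely $\lprod,\rprod$ pairs occur and \eqref{antiass dialg3} is not used there; and for \eqref{eq:trileib2} the paper first applies \eqref{axiom6} and \eqref{axiom10} (not \eqref{axiom8}) to rewrite the four $\mprod$-terms as pure dialgebra expressions, after which the remaining eight terms cancel via the dialgebra axioms \eqref{antiass dialg1}--\eqref{antiass dialg3}.
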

\begin{proof}
    Since $(A,\lprod,\rprod,\mprod)$ is a anti-associative trialgebra, then $(A,\lprod,\rprod)$ is a anti-associative dialgebra. Thus, according to Theorem \ref{antiasstoantiLeib} $(A,\{\c,\c\})$ is an anti-Leibniz algebra.
Since $(A,\mprod)$ is a anti-associative algebra, then  $(A,\circ )$ is a mock-Lie algebra. It remains to 
check \eqref{eq:trileib1} and \eqref{eq:trileib2}.
 Using Definition \ref{antitriass}, then,  
 for any $x,y,z\in A$, we have
 \begin{eqnarray*}
     \{x,y\circ z\}+y\circ\{x,z\}+z\circ\{x,y\}&=&x\lprod (y\mprod z)+(y\mprod z)\rprod x+x\lprod (z\mprod y)\\&&+(z\mprod y)\rprod x+y\mprod(x\lprod z)+y\mprod(z\rprod x)\\&&+(x\lprod z)\mprod y+(z\rprod x)\mprod  y+z\lprod(x\mprod y)\\&&+z\mprod(y\rprod x)+(x\lprod y)\mprod z+(y\rprod x)\mprod z\\&=&0.
 \end{eqnarray*}
 On the other hand, we have
 \begin{eqnarray*}
     \{x\circ y,z\}-\{\{x,y\},z\}&=&(x\mprod y)\lprod z+(y\mprod x)\lprod z+z\rprod(x\mprod y)+z\rprod(y\mprod x)\\&&-(x\lprod y)\lprod z-(y\rprod x)\lprod z-z\rprod(x\lprod y)-z\rprod(y\rprod x)\\&=&-x\lprod(y\lprod z)-y\lprod(x\lprod z)-(z\rprod x)\rprod y-(z\rprod y)\rprod x\\&&-(x\lprod y)\lprod z-(y\rprod x)\lprod z-z\rprod(x\lprod y)-z\rprod(y\rprod x)\\&=&0.
     \end{eqnarray*}
     Thus, the result follows.
\end{proof}


\section{Further discussion}

In \cite{BenAliHajjejiChtiouiMabrouk} (see also \cite{Karima}), The authors present the concept of a mock-Lie bialgebra, which is equivalent to a Manin triple of mock-Lie algebras. In particular, they explore a specific case known as a coboundary mock-Lie bialgebra, which leads to the introduction of the mock-Lie Yang-Baxter equation, an analogue of the classical Yang-Baxter equation for Lie algebras. Notably, a skew-symmetric solution of the mock-Lie Yang-Baxter equation results in the formation of a mock-Lie bialgebra. In this paper, we study the dual representation. Motivated by this work, we will study anti-Leibniz bialgebras and show that matched
pairs of anti-Leibniz algebras, Manin triples of anti-Leibniz algebras and anti-Leibniz bialgebras are equivalent.
\section*{Acknowledgement}The authors would like to thank the referee for valuable comments and suggestions on this article.


\end{document}